\newtheorem{theorem}{Theorem}[section]
\newtheorem{lemma}[theorem]{Lemma}
\newtheorem{corollary}[theorem]{Corollary}
\newtheorem{question}[theorem]{Question}
\theoremstyle{definition}
\newtheorem{definition}[theorem]{Definition}
\newtheorem{proposition}[theorem]{Proposition}
\theoremstyle{remark}
\begin{document}

\title[Topological gyrogroups with $Fr\acute{e}chet$-Urysohn property and $\omega^{\omega}$-base]
{Topological gyrogroups with $Fr\acute{e}chet$-Urysohn property and $\omega^{\omega}$-base}

\author{Meng Bao}
\address{(Meng Bao): College of Mathematics, Sichuan University, Chengdu 610064, P. R. China}
\email{mengbao95213@163.com}

\author{Xiaoyuan Zhang}
\address{(Xiaoyuan Zhang): College of Mathematics, Sichuan University, Chengdu 610064, P. R. China}
\email{405518791@qq.com}

\author{Xiaoquan Xu*}
\address{(Xiaoquan Xu): School of mathematics and statistics,
Minnan Normal University, Zhangzhou 363000, P. R. China}
\email{xiqxu2002@163.com}

\thanks{The authors are supported by the National Natural Science Foundation of China (11661057, 12071199) and the Natural Science Foundation of Jiangxi Province, China (20192ACBL20045)\\
*corresponding author}

\keywords{Topological gyrogroups; metrizability; $\omega^{\omega}$-base; $Fr\acute{e}chet$-Urysohn.}
\subjclass[2010]{Primary 54A20; secondary 11B05; 26A03; 40A05; 40A30; 40A99.}

\begin{abstract}
The concept of topological gyrogroups is a generalization of a topological group. In this work, ones prove that a topological gyrogroup $G$ is metrizable iff $G$ has an $\omega^{\omega}$-base and $G$ is Fr\'echet-Urysohn. Moreover, in topological gyrogroups, every (countably, sequentially) compact subset being strictly (strongly) Fr\'echet-Urysohn and having an $\omega^{\omega}$-base are all weakly three-space properties with $H$ a closed $L$-subgyrogroup.
\end{abstract}

\maketitle
\section{Introduction}
The concept of a gyrogroup was originally posed by Ungar in \cite{UA1988,UA}. It is obvious that a group is a gyrogroup that every groupoid automorphism is an identity mapping. Then, in 2017, Atiponrat \cite{AW} equipped the gyrogroup with a topology and gave the definition of a topological gyrogroup. At the same time, she gave some examples of topological gyrogroups, such as M\"{o}bius gyrogroups  equipped with standard topology. Moreover, she posed an open problem whether the first-countability and metrizability are equivalent in topological gyrogroups. Afterwards, Cai, Lin and He in \cite{CZ} gave a positive answer about this problem, since all topological gyrogroups are rectifiable spaces. In fact, this kind of spaces has been studied for many years, see \cite{AW1,AW2020,BL3,FM2,LF,LF1,LF3,UA,WAS2020}. However, they all did not research the quotient spaces of topological gyrogroups. Until in \cite{BL,BL1,BL2,BLX}, Bao and Lin started to investigate the quotient spaces of strongly topological gyrogroups and achieved some good results. For example, if $H$ is an admissible $L$-subgyrogroup of a strongly topological gyrogroup $G$, then the left coset space $G/H$ is submetrizable. More important, they constructed a strongly topological gyrogroup with an infinite $L$-subgyrogroup. By the same construction, we can obtain a topological gyrogroup with an infinite $L$-subgyrogroup. Therefore, it is meaningful to research the quotient spaces of a topological gyrogroup when the left coset is an $L$-subgyrogroup. In particular, we will investigate what properties of topological groups still hold in topological gyrogroups.

This paper is aims to research topological gyrogroups with $\omega^{\omega}$-base, Fr\'echet-Urysohn properties and weakly three-space properties. We prove that a topological gyrogroup $G$ is metrizable iff $G$ has an $\omega^{\omega}$-base and $G$ is Fr\'echet-Urysohn. Moreover, in topological gyrogroups, every (countably, sequentially) compact subset being strictly (strongly) Fr\'echet-Urysohn and having an $\omega^{\omega}$-base are all weakly three-space properties with $H$ a closed $L$-subgyrogroup. More precisely, if $H$ is a closed $L$-subgyrogroup of a topological gyrogroup $G$ with the first-countablility of every (countably, sequentially) compact subset of $H$, and if every (countably, sequentially) compact subset of $G/H$ is strictly (strongly) Fr\'echet-Urysohn, respectively, then every (countably, sequentially) compact subset of $G$ is strictly (strongly) Fr\'echet-Urysohn. Finally, if a topological gyrogroup $G$ has a closed first-countable $L$-subgyrogroup $H$ with the quotient space $G/H$ having an $\omega^{\omega}$-base, then $G$ has an $\omega^{\omega}$-base.

\section{Preliminaries}

In this paper, we assume that all topological spaces are Hausdorff, $\mathbb{N}$ denote the set of all positive integers and $\omega$ denote the first infinite ordinal. The readers see \cite{AA,E,UA} for more notation and terminology. Next we recall some definitions and facts.

\begin{definition}\cite{UA}
Assume that $(G, \oplus)$ is a groupoid. We call $(G,\oplus)$ a {\it gyrogroup}, if the following conditions are satisfied:

\smallskip
(G1) For every $a\in G$, there is a unique identity element $0\in G$ with $0\oplus a=a=a\oplus0$;

\smallskip
(G2) for every $x\in G$, we can find a unique inverse element $\ominus x\in G$ with $\ominus x \oplus x=0=x\oplus (\ominus x)$;

\smallskip
(G3) for every $x, y\in G$, we can find $\mbox{gyr}[x, y]\in \mbox{Aut}(G, \oplus)$ with $x\oplus (y\oplus z)=(x\oplus y)\oplus \mbox{gyr}[x, y](z)$ for arbitrary $z\in G$, and

\smallskip
(G4) for every $x, y\in G$, $\mbox{gyr}[x\oplus y, y]=\mbox{gyr}[x, y]$.
\end{definition}

Notice that a group is a gyrogroup naturally only if the $\mbox{gyr}[x,y]$ is the identity mapping.

\begin{definition}\cite{ST}
Assume that $(G,\oplus)$ is a gyrogroup. We call a nonempty subset $H$ of $G$ a {\it subgyrogroup}, if the restriction of $gyr[a,b]$ to $H$ is an automorphism of $H$ for every $a,b\in H$ and $H$ forms a gyrogroup under the operation inherited from $G$. Denote it by $H\leq G$.

\smallskip
Moreover, we call $H$ an {\it $L$-subgyrogroup}, if $H$ is a subgyrogoup and satisfies $gyr[a, h](H)=H$ for each $a\in G$ and $h\in H$. Denote it by $H\leq_{L} G$.
\end{definition}

\begin{lemma}\cite{UA}\label{a}
Assume that $(G, \oplus)$ is a gyrogroup. For every $x, y, z\in G$,

\begin{enumerate}
\smallskip
\item $(\ominus x)\oplus (x\oplus y)=y$. \ \ \ (left cancellation law)

\smallskip
\item $(x\oplus (\ominus y))\oplus gyr[x, \ominus y](y)=x$. \ \ \ (right cancellation law)

\smallskip
\item $(x\oplus gyr[x, y](\ominus y))\oplus y=x$.

\smallskip
\item $gyr[x, y](z)=\ominus (x\oplus y)\oplus (x\oplus (y\oplus z))$.
\end{enumerate}
\end{lemma}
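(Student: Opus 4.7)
The plan is to derive all four identities from the four gyrogroup axioms, following the classical development due to Ungar. The workhorses are the left gyroassociative law (G3) and the right loop property (G4); both cancellation laws and the representation of the gyration in (4) then fall out with a bit of algebraic bookkeeping.

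First I would establish the auxiliary facts $\mbox{gyr}[x, 0] = \mbox{gyr}[0, x] = \mbox{id}_G$ and then deduce $\mbox{gyr}[\ominus x, x] = \mbox{id}_G$ from (G4) in the form $\mbox{gyr}[\ominus x \oplus x, x] = \mbox{gyr}[\ominus x, x]$. The triviality of the $0$-gyrations is the most delicate step, because comparing $(x \oplus 0) \oplus \mbox{gyr}[x, 0](z) = x \oplus (0 \oplus z) = x \oplus z$ with $x \oplus z$ appears to require a cancellation law we have not yet proved. One circumvents this by applying $\ominus x \oplus (\cdot)$ to both sides, expanding again via (G3), and then using that each gyration is an automorphism, hence a bijection, to pin down $\mbox{gyr}[x, 0]$ as the identity. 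With $\mbox{gyr}[\ominus x, x] = \mbox{id}_G$ in hand, (1) follows from $(\ominus x) \oplus (x \oplus y) = ((\ominus x) \oplus x) \oplus \mbox{gyr}[\ominus x, x](y) = 0 \oplus y = y$, and (4) then follows by $\oplus$-left-cancelling $x \oplus y$ in (G3) using (1).

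For (2), apply (G3) with $\ominus y$ in place of the middle argument and $y$ in place of $z$:
\[
x = x \oplus 0 = x \oplus ((\ominus y) \oplus y) = (x \oplus (\ominus y)) \oplus \mbox{gyr}[x, \ominus y](y),
\]
which is precisely the claim. Identity (3) is obtained by a similar route: since each $\mbox{gyr}[x, y]$ is an automorphism, $\mbox{gyr}[x, y](\ominus y) = \ominus \mbox{gyr}[x, y](y)$; a combined use of (G3), (G4), and (1) then yields $(x \oplus \mbox{gyr}[x, y](\ominus y)) \oplus y = x$. The main obstacle throughout is avoiding circularity in the early derivation that $\mbox{gyr}[x, 0]$ and $\mbox{gyr}[0, x]$ are the identity automorphism; once that foundation is laid, each of (1)--(4) reduces to a short algebraic manipulation with (G3) and (G4).
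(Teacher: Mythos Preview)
The paper does not prove this lemma; it is quoted verbatim from Ungar's monograph \cite{UA}, so there is no in-paper argument to compare against. Your outline is essentially the standard development in that reference: one first shows $\mbox{gyr}[0,x]=\mbox{gyr}[x,0]=\mbox{id}_G$ (your device of applying $\ominus x\oplus(\cdot)$ and invoking bijectivity of gyrations to avoid circularity is exactly the usual workaround), then deduces $\mbox{gyr}[\ominus x,x]=\mbox{id}_G$ from (G4), which gives (1) immediately; (4) is then read off from (G3) after left-cancelling $x\oplus y$ via (1). Your one-line proof of (2) by expanding $x\oplus((\ominus y)\oplus y)$ with (G3) is correct and complete.

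The only place your sketch understates the work is (3). Setting $z=\ominus y$ in (G3) yields the companion identity $(x\oplus y)\oplus \mbox{gyr}[x,y](\ominus y)=x$, and (G4) lets you rewrite the gyration as $\mbox{gyr}[x\oplus y,y]$; but turning this into $(x\oplus \mbox{gyr}[x,y](\ominus y))\oplus y=x$ for \emph{every} $x$ requires either knowing that the right translation $R_y$ is surjective, or equivalently an additional gyration identity such as the inversion law $\mbox{gyr}[a,b]^{-1}=\mbox{gyr}[b,a]$ (from which the right gyroassociative law and then (3) follow). Ungar proves these intermediate facts first; your phrase ``a combined use of (G3), (G4), and (1)'' conceals a nontrivial detour of this kind. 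The plan is sound, but (3) is not quite as short as (2).
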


\begin{definition}\cite{AW}
Call $(G, \tau, \oplus)$ a {\it topological gyrogroup} if it satisfies the followings:

\smallskip
(1) $(G, \tau)$ is a topological space.

\smallskip
(2) $(G, \oplus)$ is a gyrogroup.

\smallskip
(3) The binary operation $\oplus: G\times G\rightarrow G$ is jointly continuous, where $G\times G$ is equipped with the product topology, and the inverse operation $\ominus (\cdot): G\rightarrow G$, i.e. $x\rightarrow \ominus x$, is also continuous.
\end{definition}

$\mathbf{Remark}$ Obviously, every topological group is a topological gyrogroup. However, every topological gyrogroup whose gyrations are not identically equal to the identity is not a topological group. Moreover, it was given an example in \cite{AW} to show that there exists a topological gyrogroup but not a topological group, such as the Einstein gyrogroup equipped with the standard topology.

\begin{definition}\cite{BT,LPT,GK}
Let $X$ be a topological space and $x\in X$. We say that $x$ has a {\it neighborhood $\omega^{\omega}$-base} or a {\it local $\mathfrak{G}$-base} if there is a base $\{U_{\alpha}(x):\alpha \in \mathbb{N}^{\mathbb{N}}\}$ of neighborhoods at $x$ with $U_{\beta}(x)\subset U_{\alpha}(x)$ for every $\alpha \leq \beta$ in $\mathbb{N}^{\mathbb{N}}$, where $\mathbb{N}^{\mathbb{N}}$ consisted by all functions from $\mathbb{N}$ to $\mathbb{N}$ equipped with the natural partial order, ie., $f\leq g$ iff $f(n)\leq g(n)$ for every $n\in \mathbb{N}$. We say that $X$ has an {\it $\omega^{\omega}$-base} or a {\it $\mathfrak{G}$-base} if $X$ has a neighborhood $\omega^{\omega}$-base or a local $\mathfrak{G}$-base at each point of $X$.
\end{definition}

\begin{definition}\cite{FS}
If $X$ is a topological space and $A\subset X$, we call $A$ {\it sequentially closed} if there is not sequence of points of $A$ converging to a point not in $A$. Call $X$ {\it sequential} if all sequentially closed subsets of $X$ are closed.
\end{definition}

\begin{definition}\cite{FS}
A topological space $X$ is called {\it Fr\'echet-Urysohn at a point $x\in X$} if there exists a sequence $\{x_{n}\}_{n\in \mathbb{N}}$ in $A$ with $\{x_{n}\}_{n\in \mathbb{N}}$ converging to $x$ for each $A\subset X$ satisfying $x\in \overline{A}\subset X$. We call $X$ {\it Fr\'echet-Urysohn} if it is Fr\'echet-Urysohn at every point.
\end{definition}

\begin{definition} \cite{GJ,SF}
A topological space $X$ is called {\it strictly (strongly) Fr\'echet-Urysohn at a point $x\in X$} if whenever $\{A_{n}\}_{n\in \mathbb{N}}$ is a sequence (decreasing sequence) of subsets in $X$ and $x\in \bigcap _{n\in \mathbb{N}}\overline{A_{n}}$, we can find $x_{n}\in A_{n}$ for all $n\in \mathbb{N}$ with the sequence $\{x_{n}\}_{n\in \mathbb{N}}$ converging to $x$. We call $X$ {\it strictly (strongly) Fr\'echet-Urysohn} if it is strictly (strongly) Fr\'echet-Urysohn at every point.
\end{definition}

\section{$\omega^{\omega}$-base and Fr\'echet-Urysohn property in topological gyrogroups.}

In this section, ones research topological gyrogroups with $\omega^{\omega}$-base and Fr\'echet-Urysohn property. It is shown that if a topological gyrogroup $G$ is first-countable, then it has an $\omega^{\omega}$-base. If a topological gyrogroup $G$ has an $\omega^{\omega}$-base and is Fr\'echet-Urysohn, then it is metrizable. Therefore, we deduce that a topological gyrogroup $G$ is metrizable iff having an $\omega^{\omega}$-base and being Fr\'echet-Urysohn are both satisfied by $G$.

\bigskip
Suppose that a topological gyrogroup $G$ has an $\omega^{\omega}$-base $\{U_{\alpha}:\alpha \in \mathbb{N}^{\mathbb{N}}\}$. Set $$I_{k}(\alpha)=\{\beta \in \mathbb{N}^{\mathbb{N}}:\beta _{i}=\alpha _{i} ~for~i=1,...,k\}, ~~and~~D_{k}(\alpha)=\bigcap_{\beta \in I_{k}(\alpha)}U_{\beta},$$ where $\alpha =(\alpha _{i})_{i\in \mathbb{N}}\in \mathbb{N}^{\mathbb{N}}$ and $k\in \mathbb{N}$. Then $\{D_{k}(\alpha)\}_{k\in \mathbb{N}}$ is an increasing subset sequence of $G$ and contains the identity element $0$.

\begin{definition}\cite{CZ1}
We call a topological space $X$ {\it strong $\alpha_{4}$-space} if an arbitrary subset $\{x_{p,q}:p,q\in \mathbb{N}\}$ of $X$ is such that $lim_{q\rightarrow \infty}x_{p,q}=x\in X$ for each $p\in \mathbb{N}$, we can find strictly increasing natural number sequences $\{l_{k}\}_{k\in \mathbb{N}}$ and $\{m_{k}\}_{k\in \mathbb{N}}$ such that $lim _{k\rightarrow \infty}x_{l_{k},m_{k}}=x$.
\end{definition}

\begin{lemma}\label{fyl1}
Every first-countable topological gyrogroup has an $\omega^{\omega}$-base.
\end{lemma}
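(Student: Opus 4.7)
The plan is to reduce the claim to producing a neighborhood $\omega^{\omega}$-base at the identity $0$, and then to write down such a base directly from a countable local base. First I would record that $G$ is homogeneous: for any $a\in G$, the left translation $L_{a}:x\mapsto a\oplus x$ is continuous by joint continuity of $\oplus$, and the left cancellation law (Lemma \ref{a}(1)) together with $\ominus(\ominus a)=a$ (which follows from uniqueness of inverses in (G2)) identifies its inverse as $L_{\ominus a}$, itself continuous since $\ominus$ is continuous. Thus each $L_{a}$ is a self-homeomorphism of $G$, and it suffices to construct a local $\omega^{\omega}$-base at $0$; translating by $L_{a}$ will then supply one at each point $a\in G$.

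Next I would use first-countability at $0$ to fix a decreasing countable local base $V_{1}\supseteq V_{2}\supseteq\cdots$ at $0$ (obtained by intersecting finitely many members of any given countable base). For each $\alpha=(\alpha_{i})_{i\in\mathbb{N}}\in\mathbb{N}^{\mathbb{N}}$ I would simply set $U_{\alpha}:=V_{\alpha_{1}}$. Two verifications remain. The family $\{U_{\alpha}:\alpha\in\mathbb{N}^{\mathbb{N}}\}$ is a neighborhood base at $0$, because any open neighborhood $W$ of $0$ contains some $V_{n}$, and any $\alpha$ with $\alpha_{1}=n$ then satisfies $U_{\alpha}=V_{n}\subseteq W$. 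The monotonicity requirement also holds: if $\alpha\leq\beta$ in $\mathbb{N}^{\mathbb{N}}$ then $\alpha_{1}\leq\beta_{1}$, so $U_{\beta}=V_{\beta_{1}}\subseteq V_{\alpha_{1}}=U_{\alpha}$ because the chain $\{V_{n}\}$ is decreasing.

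I do not expect a genuine obstacle. The gyrogroup structure is used only to provide the homogeneity that transports the local base at $0$ to the remaining points, while the construction itself is driven entirely by first-countability; in fact the same argument proves that every first-countable topological space has an $\omega^{\omega}$-base. The only point that needs to be read off the definition carefully is that an $\omega^{\omega}$-base only demands monotonicity of the indexing and not injectivity, so letting $U_{\alpha}$ depend solely on the first coordinate $\alpha_{1}$ is legitimate.
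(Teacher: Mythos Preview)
Your argument is correct and is essentially the same as the paper's: choose a decreasing countable base $\{V_{n}\}_{n\in\mathbb{N}}$ at $0$ and set $U_{\alpha}=V_{\alpha_{1}}$. The paper's proof is terser---it omits the homogeneity discussion and the two verifications you spell out---but the construction is identical.
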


\begin{proof}
Let $G$ be a first-countable topological gyrogroup, and $\{V_{n}\}_{n\in \mathbb{N}}$ a decreasing base at the identity element $0$.  Put $W_{\alpha}=V_{\alpha_{1}}$ for each $\alpha \in \mathbb{N}^{\mathbb{N}}$. We have that $\{W_{\alpha}\}_{\alpha \in \mathbb{N}^{\mathbb{N}}}$ is an $\omega^{\omega}$-base in $G$.
\end{proof}

\begin{lemma}\cite{LF1}
If a topological gyrogroup $G$ is Fr\'echet-Urysohn, then it is a strong $\alpha_{4}$-space.
\end{lemma}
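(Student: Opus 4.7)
My plan is to combine the homogeneity of $G$ with an application of the Fr\'echet--Urysohn property to a diagonally chosen set.

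I would first observe that every left translation $L_a(x)=a\oplus x$ is a homeomorphism of $G$: it is continuous because $\oplus$ is jointly continuous, and the left cancellation law identifies its inverse as $L_{\ominus a}$, which is continuous for the same reason. Hence $G$ is homogeneous, and by translating I may assume that the common limit $x$ equals the identity $0$.

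Given the array $\{x_{p,q}\}$ with $x_{p,q}\to 0$ as $q\to\infty$ for every $p$, I would then introduce the diagonal set
\[
A=\{x_{p,q}:p,q\in\mathbb{N},\ q\ge p\}.
\]
Every tail $\{x_{p,q}:q\ge p\}$ converges to $0$, so $0\in\overline{A}$. Fr\'echet--Urysohn at $0$ supplies a sequence $a_n=x_{p_n,q_n}\in A$ with $q_n\ge p_n$ and $a_n\to 0$. If $(p_n)$ is unbounded I pass to a subsequence on which $p_{n_k}$ is strictly increasing; since $q_{n_k}\ge p_{n_k}\to\infty$, a further extraction makes $q_{n_k}$ strictly increasing as well, and $l_k=p_{n_k}$, $m_k=q_{n_k}$ witness the strong $\alpha_{4}$ property.

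The hard part is the opposite case, where every sequence Fr\'echet--Urysohn hands us from $A$ has bounded $p$-coordinate, and therefore concentrates on finitely many rows. To rule this out I would iterate on the decreasing family $A_k=\{x_{p,q}:q\ge p\ge k\}$, each of which still has $0$ in its closure by the same tail argument, extract a Fr\'echet--Urysohn sequence from each $A_k$, and then splice these countably many sequences into a single one using the gyrogroup operation $\oplus$. The role of the gyrogroup structure, beyond mere homogeneity, is exactly in this splicing step: joint continuity of $\oplus$ and $\ominus$ is what guarantees that the combined sequence still converges to $0$ while its $p$-coordinate is forced to be unbounded. Once such a sequence is produced, the argument of the previous paragraph yields the required strictly increasing $l_k$ and $m_k$.
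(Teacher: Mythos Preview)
The paper itself gives no proof of this lemma; it simply cites it from \cite{LF1}, where the result is established for the broader class of rectifiable spaces (and topological gyrogroups are rectifiable by \cite{CZ}). So there is no proof in the paper to compare against; what remains is whether your argument is complete.

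Your steps through the case ``$(p_n)$ unbounded'' are fine. The gap is in your handling of the opposite case. Extracting, for each $k$, a sequence $s^{(k)}$ from $A_k$ converging to $0$ does not help: in the bad scenario each $s^{(k)}$ may live entirely in a single row (row $k$, say), and you give no mechanism by which ``splicing'' such sequences via $\oplus$ produces a single sequence that both converges to $0$ and has unbounded row index. Joint continuity of $\oplus$ and $\ominus$ alone does not force this; a diagonal interleaving $s^{(1)}_{1},s^{(2)}_{1},s^{(3)}_{1},\dots$ has no reason to converge, and any genuine use of $\oplus$ to combine terms is left unspecified.

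The standard fix, which is exactly what \cite{LF1} does and what the present paper reproduces in the proof of Lemma~\ref{h}, is a translation trick. Pick a nontrivial sequence $b_p\to 0$ with $b_p\neq 0$, choose symmetric neighbourhoods $V_p$ of $0$ with $b_p\notin V_p\oplus V_p$, pass to tails so that $x_{p,q}\in V_p$ for all $q$, and set $C_p=\{x_{p,q}\oplus b_p:q\in\mathbb{N}\}$. Then $b_p\in\overline{C_p}$ but $0\notin\overline{C_p}$ (since $C_p\cap V_p=\emptyset$), while $0\in\overline{\bigcup_p C_p}$. A Fr\'echet--Urysohn sequence from $\bigcup_p C_p$ converging to $0$ must therefore meet infinitely many $C_p$, forcing the row index to be unbounded; undoing the translation via $\ominus b_p$ (using continuity of $\oplus$, $\ominus$, and $\mbox{gyr}$) returns a convergent diagonal in the original array. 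This is the missing idea in your step 5.
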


\begin{lemma}\cite{GKL}\label{GKL1}
Suppose $\alpha =(\alpha _{i})_{i\in \mathbb{N}}\in \mathbb{N}^{\mathbb{N}}$ and $\beta _{k}=(\beta ^{k}_{i})_{i\in \mathbb{N}}\in I_{k}(\alpha)$ for all $k\in \mathbb{N}$. Then we can find $\gamma \in \mathbb{N}^{\mathbb{N}}$ with $\alpha \leq \gamma$ and $\beta _{k}\leq \gamma$ for arbitrary $k\in \mathbb{N}$.
\end{lemma}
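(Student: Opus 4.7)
The plan is to construct $\gamma$ coordinate-by-coordinate as a maximum of finitely many naturals. The key observation is that although the family $\{\beta_k\}_{k\in\mathbb{N}}$ is countably infinite, for each fixed index $i$ only finitely many of the sequences $\beta_k$ can possibly differ from $\alpha$ at the $i$-th coordinate. Indeed, by the very definition of $I_k(\alpha)$, for every $k\geq i$ the equality $\beta^{k}_{i}=\alpha_{i}$ holds, so the potentially troublesome values are confined to the indices $k<i$.

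Concretely, I would set
\[
\gamma_{i}=\max\bigl\{\alpha_{i},\,\beta^{1}_{i},\,\beta^{2}_{i},\,\ldots,\,\beta^{i-1}_{i}\bigr\}
\]
for each $i\in\mathbb{N}$ (with the convention that $\gamma_{1}=\alpha_{1}$, since the set of $\beta^{k}_{1}$ with $k<1$ is empty). Because this is a maximum of finitely many natural numbers, $\gamma_{i}\in\mathbb{N}$, so $\gamma\in\mathbb{N}^{\mathbb{N}}$ is well-defined.

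To conclude I would verify the two required inequalities. That $\alpha\leq\gamma$ is immediate from $\alpha_{i}\leq\gamma_{i}$ by construction. To check $\beta_{k}\leq\gamma$ for an arbitrary $k$, fix an index $i$ and split on cases: if $i\leq k$, then $\beta_{k}\in I_{k}(\alpha)$ gives $\beta^{k}_{i}=\alpha_{i}\leq\gamma_{i}$; if $i>k$, then $k\leq i-1$, so $\beta^{k}_{i}$ is one of the entries participating in the maximum defining $\gamma_{i}$, and hence $\beta^{k}_{i}\leq\gamma_{i}$.

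There is essentially no obstacle in this argument; the only point one must be careful about is that $\gamma_{i}$ is a well-defined natural number, and this is precisely what the observation on $I_{k}(\alpha)$ secures by forcing all but finitely many $\beta^{k}_{i}$ to equal $\alpha_{i}$. The entire proof is thus a short coordinate-wise construction together with a two-case verification.
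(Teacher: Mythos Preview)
Your argument is correct. The paper does not actually supply a proof of this lemma; it is quoted from \cite{GKL} and stated without demonstration, so there is no in-paper proof to compare against. Your coordinate-wise maximum construction, exploiting that $\beta_k\in I_k(\alpha)$ forces $\beta^{k}_{i}=\alpha_i$ whenever $i\le k$, is exactly the standard short proof of this fact and is complete as written.
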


\begin{theorem}\label{fdl1}
If a Hausdorff topological gyrogroup $G$ is Fr\'echet-Urysohn and has an $\omega^{\omega}$-base $\{U_{\alpha}:\alpha \in \mathbb{N}^{\mathbb{N}}\}$, then $G$ is metrizable.
\end{theorem}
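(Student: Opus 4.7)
The plan is to reduce the theorem to first-countability and then contradict the assumption that $G$ is not first-countable. By the result of Cai-Lin-He cited in the introduction, every first-countable topological gyrogroup (being rectifiable) is metrizable, so it suffices to produce a countable neighborhood base at the identity $0$. I will argue by contradiction: assume no such base exists.

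The first step is to locate $\alpha^{*}\in\mathbb{N}^{\mathbb{N}}$ such that $D_{k}(\alpha^{*})$ fails to be a neighborhood of $0$ for every $k\in\mathbb{N}$. Suppose, on the contrary, that for every $\alpha$ there is some $k(\alpha)$ with $D_{k(\alpha)}(\alpha)$ a neighborhood of $0$. Since $D_{k}(\alpha)$ depends only on $\alpha_{1},\ldots,\alpha_{k}$, the family $\mathcal{B}=\bigcup_{k\in\mathbb{N}}\{D_{k}(s): s\in\mathbb{N}^{k}\text{ and }D_{k}(s)\text{ is a neighborhood of }0\}$ is countable. Given any neighborhood $V$ of $0$, choose $\beta$ with $U_{\beta}\subseteq V$ and let $k=k(\beta)$; then $D_{k}(\beta|_{k})\subseteq U_{\beta}\subseteq V$ because $\beta$ itself lies in $I_{k}(\beta)$. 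So $\mathcal{B}$ is a countable neighborhood base at $0$, contradicting non-first-countability. Hence the required $\alpha^{*}$ exists.

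Now each $D_{k}(\alpha^{*})$ is not a neighborhood of $0$, which gives $0\in\overline{G\setminus D_{k}(\alpha^{*})}$. The Fr\'echet-Urysohn hypothesis yields, for each $k$, a sequence $\{x_{k,n}\}_{n\in\mathbb{N}}\subseteq G\setminus D_{k}(\alpha^{*})$ with $x_{k,n}\to 0$ as $n\to\infty$. Since $G$ is a strong $\alpha_{4}$-space by the lemma of Lin cited just above, I can extract strictly increasing sequences $\{k_{j}\}_{j\in\mathbb{N}},\{n_{j}\}_{j\in\mathbb{N}}\subseteq\mathbb{N}$ with $x_{k_{j},n_{j}}\to 0$. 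For each $j$, the condition $x_{k_{j},n_{j}}\notin D_{k_{j}}(\alpha^{*})=\bigcap_{\beta\in I_{k_{j}}(\alpha^{*})}U_{\beta}$ lets me pick $\beta^{(j)}\in I_{k_{j}}(\alpha^{*})\subseteq I_{j}(\alpha^{*})$ with $x_{k_{j},n_{j}}\notin U_{\beta^{(j)}}$.

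Finally, Lemma \ref{GKL1} applied to $\alpha^{*}$ and $(\beta^{(j)})_{j\in\mathbb{N}}$ produces $\gamma\in\mathbb{N}^{\mathbb{N}}$ with $\alpha^{*}\leq\gamma$ and $\beta^{(j)}\leq\gamma$ for every $j$, whence $U_{\gamma}\subseteq U_{\beta^{(j)}}$ for each $j$ by the monotonicity of the $\omega^{\omega}$-base. Since $U_{\gamma}$ is a neighborhood of $0$ and $x_{k_{j},n_{j}}\to 0$, eventually $x_{k_{j},n_{j}}\in U_{\gamma}\subseteq U_{\beta^{(j)}}$, contradicting the choice of $\beta^{(j)}$. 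This proves first-countability and hence metrizability. The step I expect to be the main obstacle is the production of $\alpha^{*}$: it requires careful bookkeeping on the tree $\bigcup_{k\in\mathbb{N}}\mathbb{N}^{k}$ and uses essentially that $D_{k}(\alpha)$ depends only on $\alpha|_{k}$, combined with a clean appeal to the definition of a neighborhood base; the remainder is the standard $\alpha_{4}$-diagonal argument coupled with the domination property of the $\omega^{\omega}$-base.
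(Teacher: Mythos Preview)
Your proposal is correct and follows essentially the same route as the paper: both arguments hinge on showing that for every $\alpha$ some $D_{k}(\alpha)$ is a neighborhood of $0$ (so that the countable family $\{D_{k}(\alpha):k\in\mathbb{N},\ \alpha\in\mathbb{N}^{\mathbb{N}}\}$ yields a countable base), and both establish this via the strong $\alpha_{4}$ property together with Lemma~\ref{GKL1}. The only cosmetic difference is that you wrap the argument in an outer contradiction (assuming non-first-countability to produce $\alpha^{*}$), whereas the paper proves the Claim directly and then reads off first-countability; the mathematical content is identical.
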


\begin{proof}
First, we establish the following:

{\bf Claim.} $D_{k}(\alpha)$ is a neighborhood of $0$ for each $\alpha \in \mathbb{N}^{\mathbb{N}}$ for some $k\in \mathbb{N}$.

Suppose not, that is, we can find $\alpha \in \mathbb{N}^{\mathbb{N}}$ such that $D_{k}(\alpha)$ is not a neighborhood of $0$, for any $k\in \mathbb{N}$. It means that $0\in \overline{G\setminus D_{k}(\alpha)}$ for any $k\in \mathbb{N}$. By the hypothesis, $G$ is Fr\'echet-Urysohn, for arbitrary $k$. Therefore, we can find a sequence $\{x_{n,k}\}_{n\in \mathbb{N}}$ in $G\setminus D_{k}(\alpha)$ which converges to $0$. Since all Fr\'echet-Urysohn Hausdorff topological gyrogroups are strong $\alpha _{4}$-spaces, we can choose natural numbers sequences $(n_{i})_{i\in \mathbb{N}}$ and $(k_{i})_{i\in \mathbb{N}}$ with $lim_{i\rightarrow \infty}x_{n_{i},k_{i}}=0$, where $(n_{i})_{i\in \mathbb{N}}$ and $(k_{i})_{i\in \mathbb{N}}$ are strictly increasing.

For each $i\in \mathbb{N}$, we choose $\beta _{k_{i}}\in I_{k_{i}}(\alpha)$ with $x_{n_{i},k_{i}}\not \in U_{\beta _{k_{i}}}$. It follows from Lemma \ref{GKL1} that, for every $i\in \mathbb{N}$, $\beta _{k_{i}}\leq \gamma$ for some $\gamma \in \mathbb{N}^{\mathbb{N}}$. Therefore, for any $i\in \mathbb{N}$, $x_{n_{i},k_{i}}\not \in U_{\gamma}$. We conclude that the sequence $\{x_{n_{i},k_{i}}\}_{i\in \mathbb{N}}$ does not converge to $0$ and this is a contradiction.

Therefore, for each $\alpha \in \mathbb{N}^{\mathbb{N}}$, set $D_{k_{\alpha}}(\alpha)$ is a neighborhood of $0$, where $k_{\alpha}$ is a minimal natural number. It is clear that $D_{k_{\alpha}}(\alpha)\subset U_{\alpha}$. Moreover, for $i\in \mathbb{N}$, fix $\alpha ^{(i)}=(i,\alpha_{2},\alpha_{3},...)\in \mathbb{N}^{\mathbb{N}}$. Then for any $\beta =(\beta_{1},\beta_{2},...)\in I_{1}(\alpha ^{(i)})$, $D_{1}(\beta)=D_{1}(\alpha ^{i})$. Therefore, $\{D_{1}(\alpha):\alpha \in \mathbb{N}^{\mathbb{N}}\}=\{D_{1}(\alpha^{(i)}):i\in \mathbb{N}\}$ is countable. So, $\{D_{k}(\alpha):k\in \mathbb{N},\alpha \in \mathbb{N}^{\mathbb{N}}\}$ is countable. Furthermore, $\{D_{k_{\alpha}}(\alpha):\alpha \in \mathbb{N}^{\mathbb{N}}\}\subset \{D_{k}(\alpha):k\in \mathbb{N},\alpha \in \mathbb{N}^{\mathbb{N}}\}$. Therefore, the countability of the family $\{D_{k_{\alpha}}(\alpha):\alpha \in \mathbb{N}^{\mathbb{N}}\}$ is obtained. In conclusion, the family $\{int(D_{k_{\alpha}}(\alpha)):\alpha \in \mathbb{N}^{\mathbb{N}}\}$ is a countable open neighborhood base at $0$. It follows from above that $G$ is first-countable and hence metrizable by \cite{CZ}.
\end{proof}

By Lemma \ref{fyl1} and Theorem \ref{fdl1},we deduce the following result.

\begin{corollary}
A topological gyrogroup $G$ is metrizable iff $G$ has an $\omega^{\omega}$-base and $G$ is also Fr\'echet-Urysohn.
\end{corollary}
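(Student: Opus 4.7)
The plan is to show $G$ is first-countable and then invoke the Cai--Lin--He theorem (cited as \cite{CZ}) that first-countability implies metrizability in topological gyrogroups. To produce a countable neighborhood base at $0$, I would work with the sets $D_k(\alpha)=\bigcap_{\beta\in I_k(\alpha)}U_\beta$ introduced just before the theorem statement, and show that for every $\alpha\in\mathbb{N}^{\mathbb{N}}$ some $D_k(\alpha)$ is already a neighborhood of $0$.

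The heart of the argument is this claim, which I would prove by contradiction. Suppose some fixed $\alpha$ witnesses that $D_k(\alpha)$ fails to be a neighborhood of $0$ for every $k\in\mathbb{N}$. Then $0\in\overline{G\setminus D_k(\alpha)}$, so by the Fr\'echet--Urysohn property I can pick, for each $k$, a sequence $(x_{n,k})_{n\in\mathbb{N}}$ in $G\setminus D_k(\alpha)$ converging to $0$. Since a Fr\'echet--Urysohn topological gyrogroup is a strong $\alpha_4$-space, diagonalize to get strictly increasing $(n_i)$, $(k_i)$ with $x_{n_i,k_i}\to 0$. For each $i$ the failure $x_{n_i,k_i}\notin D_{k_i}(\alpha)$ yields some $\beta_{k_i}\in I_{k_i}(\alpha)$ with $x_{n_i,k_i}\notin U_{\beta_{k_i}}$, and Lemma \ref{GKL1} supplies a single $\gamma\in\mathbb{N}^{\mathbb{N}}$ dominating every $\beta_{k_i}$. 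Then $U_\gamma\subset U_{\beta_{k_i}}$ for all $i$, so $x_{n_i,k_i}\notin U_\gamma$, which contradicts convergence of $(x_{n_i,k_i})$ to $0$ since $U_\gamma$ is a neighborhood of $0$.

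Once the claim is established, for each $\alpha$ fix the least $k_\alpha$ with $D_{k_\alpha}(\alpha)$ a neighborhood of $0$, and note that $D_{k_\alpha}(\alpha)\subset U_\alpha$. The key bookkeeping observation is that $D_k(\alpha)$ depends only on the finite tuple $(\alpha_1,\dots,\alpha_k)$, so for each fixed $k$ the family $\{D_k(\alpha):\alpha\in\mathbb{N}^{\mathbb{N}}\}$ is countable, and hence so is $\{D_k(\alpha):k\in\mathbb{N},\,\alpha\in\mathbb{N}^{\mathbb{N}}\}$. Taking interiors produces a countable family of open neighborhoods of $0$, and because the original $\omega^\omega$-base property means every neighborhood of $0$ contains some $U_\alpha\supset D_{k_\alpha}(\alpha)$, this family is in fact a base at $0$. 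Thus $G$ is first-countable at $0$, and by the Cai--Lin--He theorem $G$ is metrizable.

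The main obstacle I anticipate is orchestrating the contradiction step cleanly: the double-indexed family $\{x_{n,k}\}$ coming from Fr\'echet--Urysohn must be diagonalized via strong $\alpha_4$ in a way that is compatible with the later choice of $\beta_{k_i}\in I_{k_i}(\alpha)$, so that Lemma \ref{GKL1} can be applied to the single sequence $(\beta_{k_i})$ indexed by $i$ (not by $k$). Once that is arranged the translation to first-countability and the appeal to \cite{CZ} are routine.
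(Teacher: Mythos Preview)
Your proposal is correct and follows essentially the same route as the paper: the corollary is deduced there from Lemma~\ref{fyl1} and Theorem~\ref{fdl1}, and what you have written reproduces the proof of Theorem~\ref{fdl1} (the $D_k(\alpha)$ claim via strong~$\alpha_4$ and Lemma~\ref{GKL1}, then the countability bookkeeping and the appeal to \cite{CZ}) almost verbatim. The only thing you have not addressed is the trivial forward implication---metrizable $\Rightarrow$ first-countable $\Rightarrow$ $\omega^\omega$-base (Lemma~\ref{fyl1}) and metrizable $\Rightarrow$ Fr\'echet--Urysohn---so be sure to include a line for that direction.
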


Let $X$ be a topological space and $\mathcal{N}$ a family of subsets of $X$. We call $\mathcal{N}$ a {\it $cs^{*}$-network at a point $x\in X$} \cite{GMZ} if we can find $N\in \mathcal{N}$ with $x\in N\subset O_{x}$ and $\{n\in \mathbb{N}:x_{n}\in N\}$ is infinite, where $(x_{n})_{n\in \mathbb{N}}$ is arbitrary sequence in $X$ converging to $x$ and $O_{x}$ is arbitrary neighborhood of $x$.

Then we give the concept of $cs^{*}$-character in topological gyrogroups.

\begin{definition}
If $G$ is a topological gyrogroup, we call the least cardinality of $cs^{*}$-network at the identity element $0$ of $G$ {\it $cs^{*}$-character}.
\end{definition}

\begin{theorem}
If a topological gyrogroup $G$ has an $\omega^{\omega}$-base, then it has countable $cs^{*}$-character.
\end{theorem}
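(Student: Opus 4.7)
The natural candidate for the $cs^{*}$-network is the family
\[
\mathcal{N} = \{D_{k}(\alpha) : k \in \mathbb{N},\ \alpha \in \mathbb{N}^{\mathbb{N}}\}
\]
built from the given $\omega^{\omega}$-base exactly as in the setup before Theorem \ref{fdl1}. My first step would be to note (as already observed in the proof of Theorem \ref{fdl1}) that $\mathcal{N}$ is countable: since $I_{k}(\alpha)$ depends only on $(\alpha_{1},\dots,\alpha_{k})$, so does $D_{k}(\alpha)$, and the set of finite tuples of naturals is countable.

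The main content is to verify the $cs^{*}$-network property at $0$. I would fix an arbitrary sequence $(x_{n})_{n\in\mathbb{N}}$ in $G$ with $x_{n}\to 0$ and an arbitrary neighborhood $O$ of $0$, then choose $\alpha \in \mathbb{N}^{\mathbb{N}}$ with $U_{\alpha}\subset O$, so that $D_{k}(\alpha)\subset U_{\alpha}\subset O$ for every $k$. The goal is to produce some $k$ for which $\{n : x_{n}\in D_{k}(\alpha)\}$ is infinite, which gives the required $N = D_{k}(\alpha)\in\mathcal{N}$ with $0\in N\subset O$.

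Suppose for contradiction no such $k$ exists, i.e.\ $\{n:x_{n}\in D_{k}(\alpha)\}$ is finite for every $k$. Then I would pick a strictly increasing sequence $(n_{k})_{k\in\mathbb{N}}$ with $x_{n_{k}}\notin D_{k}(\alpha)$; by definition of $D_{k}(\alpha)$, for each $k$ there is some $\beta_{k}\in I_{k}(\alpha)$ with $x_{n_{k}}\notin U_{\beta_{k}}$. Applying Lemma \ref{GKL1} yields $\gamma\in\mathbb{N}^{\mathbb{N}}$ with $\alpha\leq \gamma$ and $\beta_{k}\leq\gamma$ for all $k$. The monotonicity of the $\omega^{\omega}$-base then gives $U_{\gamma}\subset U_{\beta_{k}}$, so $x_{n_{k}}\notin U_{\gamma}$ for every $k$. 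This contradicts $x_{n}\to 0$, since $U_{\gamma}$ is a neighborhood of $0$.

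So $\mathcal{N}$ is a countable $cs^{*}$-network at the identity $0$, proving that $G$ has countable $cs^{*}$-character. I do not expect any serious obstacle here: the construction mirrors the claim inside Theorem \ref{fdl1}, and in fact the argument is even lighter because we are handed a convergent sequence directly and never need to invoke the Fr\'echet--Urysohn property to extract one. The only subtle point is simply remembering that $U_{\alpha}\subset O$ forces $D_{k}(\alpha)\subset O$ for \emph{all} $k$, which is what lets the search range freely over $k$ in the contradiction argument.
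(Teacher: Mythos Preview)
Your proposal is correct and matches the paper's proof essentially line for line: the paper also takes $\mathcal{D}=\{D_{k}(\alpha):k\in\mathbb{N},\ \alpha\in\mathbb{N}^{\mathbb{N}}\}$, assumes by contradiction that the convergent sequence meets every $D_{k}(\alpha)$ only finitely often, extracts $\beta_{k}\in I_{k}(\alpha)$ witnessing this, and applies Lemma~\ref{GKL1} to dominate all the $\beta_{k}$ by a single $\gamma$. Your version is in fact slightly more careful in that you explicitly pass from an arbitrary neighborhood $O$ to some $U_{\alpha}\subset O$ before running the argument, a step the paper leaves implicit.
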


\begin{proof}
Suppose that $\{U_{\alpha}:\alpha \in \mathbb{N}^{\mathbb{N}}\}$ is an $\omega^{\omega}$-base in $G$. Put $\mathcal{D}=\{D_{k}(\alpha):\alpha \in \mathbb{N}^{\mathbb{N}},k\in \mathbb{N}\}$. Then $\mathcal{D}$  is countable and is a $cs^{*}$-network at $0$. Choose a sequence $S=(g_{n})_{n\in \mathbb{N}}$ in $G$ which converges to $0$ and fix a neighborhood $U_{\alpha}$ of $0$. Therefore, we just need to show that $S\cap D_{k}(\alpha)$ is infinite for some $k\in \mathbb{N}$.

{\bf Claim.} $S\cap D_{k}(\alpha)$ is infinite for some $k\in \mathbb{N}$.

Suppose on the contrary, $S\cap D_{k}(\alpha)$ is finite for every $k\in \mathbb{N}$. Then, for arbitrary $k\in \mathbb{N}$, take $n_{k}\in \mathbb{N}$ and $\beta_{k}\in I_{k}(\alpha)$ such that $n_{1}<n_{2}<...$ and $g_{n_{k}}\not \in U_{\beta_{k}}$. Then $\{g_{n_{k}}\}_{k\in \mathbb{N}}$ converges to $0$. By Lemma \ref{GKL1}, there exists $\gamma \in \mathbb{N}^{\mathbb{N}}$ with $\alpha \leq \gamma$ and $\beta_{k}\leq \gamma$ for all $k\in \mathbb{N}$. It is clear that $g_{n_{k}}\not \in U_{\gamma}$ for every $k\in \mathbb{N}$. Hence, $g_{n_{k}}\not \rightarrow 0$, which is a contradiction.
\end{proof}

It was claimed in \cite{BZ} that if $G$ is a Fr\'echet-Urysohn topological group and $G$ also has countable $cs^{*}$-character, then $G$ is metrizable. Now, we pose the following problem.

\begin{question}
If a topological gyrogroup $G$ is Fr\'echet-Urysohn and $G$ also has countable $cs^{*}$-character, is $G$ metrizable?
\end{question}

\section{The weakly three-space property in topological gyrogroups with Fr\'echet-Urysohn property}
In this section, we prove that every (countably, sequentially) compact subset being strictly (strongly) Fr\'echet-Urysohn is a weakly three-space property with $H$ a closed $L$-subgyrogroup of a topological gyrogroup $G$. More precisely, for a topological gyrogroup $G$ and a closed $L$-subgyrogroup $H$, if every (countably, sequentially) compact subset of $H$ is first-countable, and every (countably, sequentially) compact subset of $G/H$ is strictly (strongly) Fr\'echet-Urysohn, then every (countably, sequentially) compact subset of $G$ is strictly (strongly) Fr\'echet-Urysohn.

The following concept of the coset space of a topological gyrogroup was introduced in \cite{BL,BL2}.

If $H$ is an $L$-subgyrogroup of a topological gyrogroup $G$, it follows from \cite[Theorem 20]{ST} that $G/H=\{a\oplus H:a\in G\}$ forms a partition of $G$. Let $\pi:G\rightarrow G/H$ be $a\mapsto a\oplus H$ for all $a\in G$, then, it is obtained that $\pi^{-1}\{\pi(a)\}=a\oplus H$. Moreover, if we denote the topology of $G$ by $\tau (G)$, we define $\tau(G/H)$ in $G/H$ as the following: $$\tau (G/H)=\{O\subset G/H: \pi^{-1}(O)\in \tau (G)\}.$$

Furthermore, we call a property $\mathscr P$ {\it three-space property} in topological groups \cite{BT3,GKL} if topological group $G$ and a closed normal subgroup $H$ of $G$ both have $\mathscr P$, then $G$ enjoys $\mathscr P$, too.

Then we give the definitions of {\it three-space property} and {\it weakly three-sapce property} in topological gyrogroups.

\begin{definition}
We call a topological property $\mathscr Q$ {\it three-space property} in topological gyrogroups if a topological gyrogroup $G$ and a closed $L$-subgyrogroup $H$ of $G$ both have $\mathscr Q$, then $G$ enjoys $\mathscr Q$, too.
\end{definition}

\begin{definition}
We call a topological property $\mathscr Q$ {\it weakly three-space property} in topological gyrogroups, if in a topological gyrogroup $G$, there is a closed $L$-subgyrogroup $H$ of $G$ having a property $\mathscr P$ which is stronger than $\mathscr Q$ and the quotient space $G/H$ has $\mathscr Q$, then $G$ has $\mathscr Q$, too.
\end{definition}

Obviously, every three-space property in topological gyrogroups is a weakly three-space property in topological gyrogroups.

There is an open problem posed by A.V. Arhangel' ski\v\i $~~$and M. Tkachenko in \cite{AA}.

\begin{question}\cite[Open problem 9.10.3]{GG}
Let all compact subsets of the groups $H$ and $G/H$ be Fr\'echet-Urysohn. Does the same hold for compact subsets of $G$?
\end{question}

It is natural to pose the following problems.

\begin{question}\label{m}
Let $G$ be a topological gyrogroup and $H$ a closed $L$-subgyrogroup of $G$. If every compact subset of $H$ and $G/H$ both are Fr\'echet-Urysohn, is any compact subset of $G$ Fr\'echet-Urysohn? In particular, is any compact subset being Fr\'echet-Urysohn a weakly three-space property in topological gyrogroups?
\end{question}

\begin{question}\label{4m1}
Let $G$ be a topological gyrogroup and $H$ a closed $L$-subgyrogroup of $G$. If  every compact subset of $H$ and $G/H$ both are strictly (strongly) Fr\'echet-Urysohn, is any compact subset of $G$ strictly (strongly) Fr\'echet-Urysohn? In particular, is any compact subset being strictly (strongly) Fr\'echet-Urysohn a weakly three-space property in topological gyrogroups?
\end{question}

Next, we show that all compact subsets being strictly (strongly) Fr\'echet-Urysohn is a weakly three-space property in topological gyrogroups. More precisely, for a topological gyrogroup $G$ and a closed $L$-subgyrogroup $H$, if every (countably, sequentially) compact subset of $H$ is first-countable, and every (countably, sequentially) compact subset of $G/H$ is strictly (strongly) Fr\'echet-Urysohn, then every (countably, sequentially) compact subset of $G$ is strictly (strongly) Fr\'echet-Urysohn, and hence give a partial answer about Question \ref{4m1}, see Theorem \ref{n}.

Let $f: X\rightarrow Y$ be a continuous onto mapping and the space $Y$ and the fibers of $f$ have $\mathscr P$, then $X$ enjoys $\mathscr P$, we call the property $\mathscr P$ {\it inverse fiber property} \cite{BT}. Moreover, if the domain $X$ is (countably, sequentially) compact, we call $\mathscr P$ an {\it inverse fiber property for (countably, sequentially) compact sets}. Moreover, if the space $X$ is regular, call $\mathscr P$ {\it regular inverse fiber property}.

\begin{lemma}\cite{XL}\label{o}
The first-countability is an inverse fiber property for (countably, sequentilly) compact sets.
\end{lemma}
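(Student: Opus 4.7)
The plan is to prove first-countability at an arbitrary point $x \in X$ by exhibiting a countable local base. Set $y = f(x)$ and $F = f^{-1}(y)$. First-countability of $Y$ at $y$ and of the fiber $F$ at $x$ yields decreasing countable local bases $\{V_n\}_{n \in \mathbb{N}}$ at $y$ in $Y$ and $\{U_n\}_{n \in \mathbb{N}}$ at $x$ in $F$. Since each $U_n$ is relatively open in $F$, pick an open $W_n$ in $X$ with $W_n \cap F = U_n$; after replacing $W_n$ by $\bigcap_{i \le n} W_i$ we may assume the sequence $\{W_n\}$ is decreasing. The natural candidate for a countable local base at $x$ in $X$ is
\[ \mathcal{B} = \{W_n \cap f^{-1}(V_m) : n, m \in \mathbb{N}\}. \]
Each element of $\mathcal{B}$ is an open neighborhood of $x$ and $\mathcal{B}$ is countable, so the remaining task is to show $\mathcal{B}$ is a base at $x$.

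To verify this, fix an arbitrary open neighborhood $O$ of $x$ in $X$. Since $\{U_n\}$ is a local base in $F$, there is $n_0$ with $U_{n_0} \subset O \cap F$. Assume for contradiction that no member of $\mathcal{B}$ lies in $O$; by a diagonal selection pick $x_m \in W_m \cap f^{-1}(V_m) \setminus O$ for each $m \ge n_0$. Then $f(x_m) \in V_m$, so $f(x_m) \to y$ in $Y$. Now invoke the (sequential, countable) compactness of $X$ to extract a cluster point $z \in X$ of $(x_m)$ (a convergent subsequence in the sequentially compact case; an accumulation point of the infinite set in the countably compact case). Continuity of $f$ together with Hausdorffness of $Y$ forces $f(z) = y$, so $z \in F$; openness of $O$ and $x_m \notin O$ force $z \notin O$; and since $x_m \in W_m$ for all $m$ with the $W_n$ nested, $z \in \bigcap_n \overline{W_n}$.

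The principal obstacle is bridging the gap between $z \in \overline{W_n}$ and $z \in W_n$. The anticipated resolution, and the technical heart of the argument in \cite{XL}, is to refine the initial choice of the $W_n$ by exploiting the regularity of $X$ so as to secure $\overline{W_{n+1}} \cap F \subset U_n$ for every $n$; this can be carried out inductively, using regularity to separate the closed set $F \setminus U_n$ from $\overline{U_{n+1}} \cap F$ inside $X$. Under this refinement, $z \in \bigcap_n \overline{W_n} \cap F \subset \bigcap_n U_n$, and since $F$ is Hausdorff and $\{U_n\}$ is a countable local base at $x$ in $F$, we have $\bigcap_n U_n = \{x\}$, so $z = x \in O$, contradicting $z \notin O$. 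Thus $\mathcal{B}$ is a countable local base at $x$, and the three compactness variants are handled uniformly by this cluster-point argument.
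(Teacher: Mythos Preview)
The paper itself supplies no proof for this lemma; it is simply quoted from \cite{XL}. Hence there is nothing in the present paper to compare your argument against, and your write-up is in effect an attempt to reconstruct the proof of the cited result.

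Your overall strategy is the standard one and is correct in outline: build the candidate base $\{W_n\cap f^{-1}(V_m)\}$, assume it fails for some open $O\ni x$, pick a diagonal sequence $x_m\in W_m\cap f^{-1}(V_m)\setminus O$, extract a cluster point $z$ using (countable, sequential) compactness of $X$, and then show $z=x$ to obtain a contradiction. Two points deserve attention, however.

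First, your refinement step is phrased more elaborately than necessary. You propose ``using regularity to separate the closed set $F\setminus U_n$ from $\overline{U_{n+1}}\cap F$'', which sounds like a normality-type separation of two closed sets. In fact only regularity at the single point $x$ is needed: since $F=f^{-1}(y)$ is closed in $X$ and $U_n=W_n'\cap F$ for some open $W_n'\subset X$, the set $F\setminus U_n$ is closed in $X$ and misses $x$; by regularity choose an open $W_n\ni x$ with $\overline{W_n}\cap(F\setminus U_n)=\emptyset$, i.e.\ $\overline{W_n}\cap F\subset U_n$. Intersecting to make the $W_n$ decreasing, your cluster point $z$ then lies in $\bigcap_n\overline{W_n}\cap F\subset\bigcap_n U_n=\{x\}$, and the contradiction follows.

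Second, note that this argument does use regularity of $X$, whereas the paper's definition of ``inverse fiber property'' carries no such hypothesis (regularity is singled out separately as ``regular inverse fiber property''). In the applications within the paper this is harmless, since $T_0$ topological gyrogroups are regular; but strictly speaking your sketch proves the \emph{regular} inverse fiber version. If you want the statement exactly as formulated, you should either check that \cite{XL} works without regularity or flag the extra assumption.
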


\begin{proposition}\label{a}
If $\mathscr P$ is an inverse fiber property, then it is a three-space property in topological gyrogroups.
\end{proposition}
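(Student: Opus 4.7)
The plan is to apply the inverse fiber property directly to the canonical quotient map $\pi:G\to G/H$ defined by $\pi(a)=a\oplus H$. By the very definition of $\tau(G/H)$ given in the preceding paragraph, $\pi$ is continuous, and it is clearly surjective. Since $G/H$ has $\mathscr P$ by hypothesis, all that remains to invoke the inverse fiber property is to check that each fiber $\pi^{-1}\{\pi(a)\}=a\oplus H$ has $\mathscr P$; once this is done, $G$ inherits $\mathscr P$ immediately from the inverse fiber property applied to $\pi$.

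To handle the fibers, I would first verify that for each $a\in G$ the left translation $L_a:G\to G$, $x\mapsto a\oplus x$, is a homeomorphism of $G$. Continuity of $L_a$ is immediate from the joint continuity of $\oplus$; by the left cancellation law $(\ominus a)\oplus(a\oplus x)=x$, the two-sided inverse of $L_a$ is the (also continuous) map $L_{\ominus a}:y\mapsto(\ominus a)\oplus y$. Restricting $L_a$ to $H$ then produces a homeomorphism from $H$ onto $L_a(H)=a\oplus H$. Because $\mathscr P$ is a topological property and $H$ enjoys $\mathscr P$ by assumption, every fiber of $\pi$ enjoys $\mathscr P$ as well.

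There is no genuine obstacle here: the substantive content is the remark that in a topological gyrogroup every left translation is a homeomorphism, which falls out of the cancellation laws recorded earlier. Notice that no gyroautomorphism intervenes in the fiber structure, because each fiber is literally the set $L_a(H)$ rather than some twisted translate. Combining the continuity and surjectivity of $\pi$, the hypothesis that $G/H$ has $\mathscr P$, and the fact that each fiber $a\oplus H$ is homeomorphic to $H$ and therefore has $\mathscr P$, the inverse fiber property yields $\mathscr P$ for $G$, which is what the proposition asserts.
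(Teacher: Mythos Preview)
Your argument is correct and follows essentially the same route as the paper: apply the inverse fiber property to the quotient map $\pi$, using that each fiber $a\oplus H$ is homeomorphic to $H$ via left translation. The only difference is that you spell out why $L_a$ is a homeomorphism, whereas the paper simply asserts that $\pi^{-1}(y)=x\oplus H$ is homeomorphic with $H$.
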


\begin{proof}
We assume that $H$ is a closed $L$-subgyrogroup of a topological gyrogroup $G$. We assume further that both gyrogroups $H$ and $G/H$ have an inverse fiber property $\mathscr P$. If $y\in G/H$, we can find $x\in G$ such that $\pi (x)=y$. Then $\pi ^{-1}(y)=x\oplus H$ is homeomorphic with $H$, so the fiber $\pi^{-1}(y)$ has $\mathscr P$ for all $y\in G/H$. It follows from the inverse fiber property of $\mathscr P$ that $G$ enjoys $\mathscr P$, too.
\end{proof}

We call a topological space $X$ having a $G_{\delta}$-diagonal \cite{GG} if the diagonal $\Delta =\{(x,x): x\in X\}$ of $X\times X$ is a $G_{\delta}$-set in $X\times X$.

\begin{lemma}\label{e}
Let $G$ be a topological gyrogroup. The following two conditions are equivalent:

$(a)$ all sequentially compact subspaces of $G$ have the first axioms of countability;

\smallskip
$(b)$ all sequentially compact subspaces of $G$ are metrizable.
\end{lemma}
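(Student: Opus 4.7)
The implication $(b)\Rightarrow(a)$ is immediate, since every metrizable space satisfies the first axiom of countability. The content lies in $(a)\Rightarrow(b)$, and for this I would proceed as follows.

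Fix an arbitrary sequentially compact subspace $K\subseteq G$; by hypothesis $(a)$, $K$ is first-countable. The plan is to show that $K$ admits a $G_{\delta}$-diagonal. Once that is established, since $K$ is Hausdorff (as a subspace of the Hausdorff gyrogroup $G$) and countably compact (sequential compactness implies countable compactness), a Chaber-type theorem forces $K$ to be in fact compact and metrizable. So the entire task reduces to exhibiting a $G_{\delta}$-diagonal for $K$.

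To construct the $G_{\delta}$-diagonal I would mimic the classical topological-group argument, adapted to the gyrogroup setting. In the group case, one chooses a countable decreasing base $\{V_{n}\}$ at the identity and sets $W_{n}=\{(x,y)\in K\times K:\ominus x\oplus y\in V_{n}\}$; the left cancellation law together with the continuity of left translation then yield $\bigcap_{n}W_{n}=\Delta_{K}$. In our situation $G$ itself need not be first-countable at $0$, but by $(a)$ the space $K$ is first-countable at each of its own points. I would combine this pointwise first-countability inside $K$ with the joint continuity of $\oplus$, $\ominus$ and the gyrations $\mbox{gyr}[x,y]$ (controlled through the cancellation laws of Lemma~\ref{a}) to construct countably many open neighborhoods of $\Delta_{K}$ in $K\times K$ whose intersection is exactly $\Delta_{K}$. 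Sequential compactness of $K$ would be invoked to extract convergent subsequences whenever a refinement is needed, so that no pair off the diagonal survives the intersection.

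The main obstacle is precisely this construction of a $G_{\delta}$-diagonal. In a genuine topological group, homogeneity and a single countable base at the identity do the work uniformly; in a topological gyrogroup the gyration terms $\mbox{gyr}[x,y]$ obstruct naive translation arguments, and we cannot appeal to first-countability of $G$ at $0$. The argument must therefore be carried out locally inside $K$, using sequential compactness to pass to convergent subsequences and using Lemma~\ref{a} to re-express $\ominus x\oplus(x\oplus y)$-type expressions without gyration residue. Once the $G_{\delta}$-diagonal is secured, Chaber's theorem gives metrizability of $K$ (and, as a bonus, actual compactness), completing $(a)\Rightarrow(b)$.
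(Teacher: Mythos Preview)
Your overall architecture matches the paper's: prove that every sequentially compact $K\subseteq G$ has a $G_{\delta}$-diagonal, then invoke the Chaber-type theorem (countably compact $+$ $G_{\delta}$-diagonal $\Rightarrow$ compact metrizable). The issue is that the heart of the argument---the actual construction of the $G_{\delta}$-diagonal---is left as a vague hope. You correctly observe that $G$ need not be first-countable at $0$, so the classical group argument with a countable base $\{V_{n}\}$ at the identity is unavailable, and you propose instead to work ``locally inside $K$'' using its pointwise first-countability together with sequential compactness. But first-countability of $K$ alone does not yield a $G_{\delta}$-diagonal (e.g.\ the ordinal space $\omega_{1}$ is first-countable, sequentially compact, and has no $G_{\delta}$-diagonal), so some genuine use of the ambient gyrogroup structure is required, and your sketch does not say what that use is. You flag this as ``the main obstacle'' without resolving it, so as written the proposal has a real gap.

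The paper closes this gap with a short trick you overlooked: hypothesis $(a)$ applies not only to $K$ but to \emph{every} sequentially compact subset of $G$. Consider the continuous map $\varphi\colon G\times G\to G$, $\varphi(x,y)=(\ominus x)\oplus y$. Since $K\times K$ is sequentially compact, so is its continuous image $F=\varphi(K\times K)$, and $0\in F$. Now apply $(a)$ to $F$: the set $F$ is first-countable, hence $\{0\}$ is a $G_{\delta}$ in $F$. Pulling back along the continuous map $\varphi|_{K\times K}\colon K\times K\to F$ and using left cancellation to identify $(\varphi|_{K\times K})^{-1}(0)$ with the diagonal $\Delta_{K}$, you get that $\Delta_{K}$ is a $G_{\delta}$ in $K\times K$. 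No gyration bookkeeping or subsequence extraction is needed; the single application of $(a)$ to $F=(\ominus K)\oplus K$ replaces the missing countable base at $0$.
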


\begin{proof}
We only need to prove (a) $\Rightarrow$ (b). Let $X$ be a non-empty sequentially compact subset of $G$. Define a mapping $\varphi : G\times G\rightarrow G$ as $\varphi (x,y)=(\ominus x)\oplus y$ for every $x,y\in G$. Since $\varphi$ is continuous and $X\times X$ is sequentially compact, we have that $F=\varphi (X\times X)$ is sequentially compact subset of $G$ and $0\in F$. By the first-countability of $F$, $\{0\}$ is a $G_{\delta}$-set in $F$. Therefore, $(\varphi |_{X\times X})^{-1}(0)=\Delta$ is the diagonal in $X\times X$, and $\Delta$ is a $G_{\delta}$-set in $X\times X$. Moreover, it is well-known that every sequentially compact space is a countably compact space. Then, it follows from \cite[Theorem 2.14]{GG} that if $X$ is a countably compact space and $X$ has $G_{\delta}$-diagonal, then $X$ is compact and metrizable. Therefore, $X$ is metrizable.
\end{proof}

\begin{corollary}\label{p}
The following two conditions are equivalent in topological gyrogroups:

$(a)$ all countably compact subspaces have the first axioms of countability;

$(b)$ all countably compact subspaces are metrizable.
\end{corollary}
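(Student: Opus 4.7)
The implication (b) $\Rightarrow$ (a) is immediate because every metrizable space is first-countable, so my plan concentrates on the non-trivial direction (a) $\Rightarrow$ (b). The cleanest strategy is a direct reduction to Lemma \ref{e}, rather than repeating the $G_{\delta}$-diagonal argument used there; I only need to translate correctly between ``countably compact'' and ``sequentially compact''.

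First I would record the elementary fact that every sequentially compact space is countably compact. Consequently condition (a) automatically implies that every sequentially compact subspace of $G$ is first-countable, which is precisely the hypothesis of Lemma \ref{e}(a). Hence Lemma \ref{e} yields that every sequentially compact subspace of $G$ is metrizable.

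Now let $X$ be an arbitrary countably compact subspace of $G$. By (a) the space $X$ is first-countable, and I would then invoke the classical general-topology fact that a first-countable, countably compact space is sequentially compact: given a sequence $(x_{n})_{n\in\mathbb{N}}$ in $X$, countable compactness produces an accumulation point $x$, and a countable neighbourhood base at $x$ permits the extraction of a subsequence converging to $x$. Hence $X$ is sequentially compact, so by the previous paragraph $X$ is metrizable, establishing (b).

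The main obstacle is essentially only bookkeeping, but it is worth flagging why I proceed by reduction rather than imitation. One cannot simply repeat the proof of Lemma \ref{e} by applying $\varphi(x,y)=(\ominus x)\oplus y$ to $X\times X$ directly, because a product of two countably compact spaces need not be countably compact, so the image $\varphi(X\times X)$ might fail to be countably compact and (a) would not apply to it. The detour through sequential compactness is what sidesteps this genuine subtlety.
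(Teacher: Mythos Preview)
Your proof is correct and follows essentially the same route as the paper: reduce (a)$\Rightarrow$(b) to Lemma~\ref{e} by using that a first-countable countably compact space is sequentially compact (and that sequentially compact implies countably compact, so Lemma~\ref{e}(a) is available). The paper's proof is simply a terser version of your argument, and your final paragraph about the failure of products of countably compact spaces is a helpful extra remark but not part of the paper's proof.
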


\begin{proof}
We will show that (a) $\Rightarrow$ (b). Indeed, it is known that if $X$ is countably compact and first-countable, then it is sequentially compact. Therefore, we complete the proof by Lemma \ref{e}.
\end{proof}

\begin{corollary}\label{g}
all of the following are three-space properties in topological gyrogroups:

$(a)$ every sequentially compact subset is closed.

\smallskip
$(b)$ every sequentially compact subset is compact.

\smallskip
$(c)$ every (countably, sequentially) compact subset is first-countable.

\smallskip
$(d)$ every (countably, sequentially) compact subset is metrizable.
\end{corollary}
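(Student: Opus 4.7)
The plan is to apply a uniform scheme based on the restricted quotient map $\pi|_K:K\to\pi(K)$ for a (countably, sequentially) compact $K\subseteq G$. In every case the image $\pi(K)$ is such a compact subset of $G/H$ and inherits the relevant property from the hypothesis on $G/H$; each fiber $K\cap\pi^{-1}(y)$ is closed in $K$ and is homeomorphic, via a translation, to such a compact subset of $H$, so inherits the property from the hypothesis on $H$.

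For (c) I will simply apply Lemma \ref{o} to $\pi|_K$: both the image and every fiber are first-countable, hence so is $K$. Property (d) then follows by chaining (c) with Lemma \ref{e} and Corollary \ref{p}, since inside a topological gyrogroup first-countability and metrizability agree on (countably, sequentially) compact subsets; starting from (d) on $H$ and $G/H$ gives (c) on both, whence (c) on $G$ by the previous step, whence (d) on $G$. For (b) the same scheme works via a perfect-map argument: $\pi(K)$ and each fiber are sequentially compact and so compact by hypothesis, and $\pi|_K$ is a closed map because for any closed $C\subseteq K$ the set $C$ is sequentially compact, so $\pi(C)$ is sequentially compact in $G/H$, hence compact and closed. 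A closed continuous surjection with compact fibers onto a compact space has compact domain, so $K$ is compact.

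For (a), which I expect to be the main obstacle, I plan to run the analogous argument with ``closed'' in place of ``compact'' wherever possible: $\pi(K)$ is closed in $G/H$, each $K\cap\pi^{-1}(y)$ is closed in the coset $\pi^{-1}(y)$ and hence in $G$, and $\pi|_K$ is closed by the same reasoning as in (b). The hard step is passing from these local facts to closedness of $K$ in $G$ itself, since an arbitrary union of closed fibers need not be closed. My plan is to suppose $x\in\overline{K}\setminus K$, note that $\pi(x)\in\overline{\pi(K)}=\pi(K)$ puts $x$ in a specific coset $\pi^{-1}(y)$, and then combine sequential compactness of $K$, openness of $\pi$, and the closedness of $\pi|_K$ to extract a sequence in $K$ converging to $x$ inside the closed fiber $K\cap\pi^{-1}(y)$, contradicting $x\notin K$.
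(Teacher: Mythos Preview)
Your treatment of (c) and (d) is essentially the paper's: the paper invokes Lemma~\ref{o} together with Proposition~\ref{a} for (c), and then Lemma~\ref{e}, Corollary~\ref{p} (and \cite[Theorem~3.10]{LF}) to pass to (d). Your argument for (b) is also fine; it simply unpacks, via the perfect map $\pi|_K$, the content that the paper gets in one line from \cite[Lemma~2.2]{LS} and Proposition~\ref{a}.

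The problem is (a). The paper does \emph{not} argue this case directly: it cites \cite[Lemma~2.2]{LS}, which shows that ``every sequentially compact subset is closed'' is an inverse fiber property, and then applies Proposition~\ref{a}. Your plan tries to reprove this, and the key step fails. From $x\in\overline{K}$ and $\pi(x)=y\in\pi(K)$ you only get $x\in\overline{K}\cap\pi^{-1}(y)$; to use the closedness of the fiber $K\cap\pi^{-1}(y)$ you would need $x\in\overline{K\cap\pi^{-1}(y)}$, and in general $\overline{K}\cap\pi^{-1}(y)\supsetneq\overline{K\cap\pi^{-1}(y)}$. Your proposed fix---``extract a sequence in $K$ converging to $x$''---presupposes a Fr\'echet-Urysohn or sequential structure on $\overline{K}$ (or on $G$) that is nowhere assumed; sequential compactness of $K$ gives you convergent subsequences of sequences already in $K$, not sequences reaching an arbitrary closure point. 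Openness of $\pi$ and closedness of $\pi|_K$ do not bridge this gap either: for an open $U\ni x$ you know $U\cap K\neq\emptyset$, but nothing forces those points into the fixed fiber $\pi^{-1}(y)$.

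So either cite \cite[Lemma~2.2]{LS} as the paper does, or supply a genuine proof that ``every sequentially compact subset is closed'' is an inverse fiber property; the sketch you gave does not do that.
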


\begin{proof}
By Proposition \ref{a} and \cite[Lemma 2.2]{LS}, (a) and (b) hold.

\smallskip
By Lemma \ref{o}, we have that every (countably, sequentially) compact subset satisfying the first axiom of countability is an inverse fiber property and by Proposition \ref{a}, Lemma \ref{e}, Corollary \ref{p} and \cite[Theorem 3.10]{LF}, (c) and (d) hold.
\end{proof}

\begin{theorem}
every sequentially compact subset being sequential is a three-space property in topological gyrogroups.
\end{theorem}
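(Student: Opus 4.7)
The plan is to establish the three-space property by showing that the topological property ``every sequentially compact subset is sequential'' is an inverse fiber property, so that Proposition~\ref{a} applies to the canonical quotient $\pi\colon G\to G/H$ (whose fibers $x\oplus H$ are each homeomorphic to $H$) and immediately delivers the conclusion.

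To verify the inverse fiber property, let $f\colon X\to Y$ be a continuous surjection in which $Y$ and each fiber $f^{-1}(y)$ have the property, and fix a sequentially compact $K\subset X$ together with a sequentially closed $A\subset K$; the goal is to show $A$ is closed in $K$. Continuity forces $f(K)$ to be sequentially compact in $Y$ and therefore sequential by hypothesis on $Y$. A direct lifting argument --- pick $b_{n}\in f(A)$ with $b_{n}\to y\in f(K)$, lift to $a_{n}\in A$, extract a convergent subsequence $a_{n_{k}}\to a\in K$ by sequential compactness of $K$, then use that $A$ is sequentially closed and $Y$ Hausdorff to conclude $a\in A$ and $f(a)=y$ --- shows that $f(A)$ is sequentially closed, hence closed, in $f(K)$. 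So any $x\in\overline{A}^{K}$ satisfies $f(x)\in f(A)$, producing some $a_{0}\in A$ in the fiber $F:=K\cap f^{-1}(f(x))$. Being closed in $K$, $F$ is sequentially compact; being contained in the sequential fiber $f^{-1}(f(x))$, $F$ is itself sequential; hence the sequentially closed set $A\cap F$ is closed in $F$.

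The concluding step is to deduce $x\in A$. The reduction is this: if one can show $x\in\overline{A\cap F}^{F}$, then sequentiality of $F$ produces a sequence in $A\cap F$ converging to $x$, and sequential closedness of $A$ in $K$ forces $x\in A$. I expect this density step to be the main obstacle, since a priori closure witnesses of $x$ in $A$ may live in cosets different from the fiber $x\oplus H$. To handle it I plan to use two tools characteristic of the topological gyrogroup setting: the openness of the quotient map $\pi$, which is the standard analogue of the topological group case and is used in \cite{BL,BL2}, and the homogeneity provided by left gyrotranslations, which are homeomorphisms that can relocate near-fiber witnesses of $x$ into $F$ while preserving membership in $A$. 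Once this density is secured, the inverse fiber property is confirmed and Proposition~\ref{a} completes the proof.
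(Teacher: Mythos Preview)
Your plan has a genuine gap at exactly the point you flag. The reduction to ``$x\in\overline{A\cap F}^{\,F}$'' is not something the tools you list can deliver: left gyrotranslations $L_{a}$ are global homeomorphisms of $G$, but they have no reason whatsoever to preserve an \emph{arbitrary} sequentially closed subset $A$ of an \emph{arbitrary} sequentially compact $K\subset G$, so ``relocating near-fiber witnesses of $x$ into $F$ while preserving membership in $A$'' is not a meaningful operation. Openness of $\pi$ gives you information about images of open sets, not about how the closure of $A$ decomposes over fibers; knowing $f(x)\in f(A)$ only produces a point of $A$ in the fiber, not an accumulation point of $A\cap F$ near $x$. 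There is also a structural mismatch: you announce you are proving an \emph{inverse fiber property} (i.e.\ a statement about arbitrary continuous surjections $f\colon X\to Y$), but the repair you propose invokes features specific to the quotient map $\pi\colon G\to G/H$; these two cannot be mixed.

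The paper sidesteps this difficulty entirely by passing through an auxiliary property. From \cite[Lemma~2.4]{LS}, ``every sequentially compact subset is sequential'' implies ``every sequentially compact subset is closed'', and the latter \emph{is} an inverse fiber property (\cite[Lemma~2.2]{LS}), so by Corollary~\ref{g}(a) it is a three-space property and therefore holds in $G$. Now the finish is one line: if $B\subset G$ is sequentially compact and $A\subset B$ is sequentially closed, then $A$ is itself sequentially compact, hence closed in $G$, hence closed in $B$; thus $B$ is sequential. The key move you are missing is precisely this observation that a sequentially closed subset of a sequentially compact set is sequentially compact, which lets you trade the awkward fiberwise density argument for the much more tractable property ``sequentially compact $\Rightarrow$ closed''.
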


\begin{proof}
Assume that $G$ is a topological gyrogroup and $H$ is a closed $L$-subgyrogroup of $G$. Assume further that every sequentially compact subsets of both $H$ and $G/H$ are sequential. It follows from \cite[Lemma 2.4]{LS} that every sequentially compact subset of $H$ and $G/H$ both are closed. By Corollary \ref{g}, every sequentially compact subset of $G$ is closed. For arbitrary sequentially compact subset $B$ of $G$, let $A$ be a sequentially closed subset of $B$. It is clear that $A$ is sequentially compact in $G$. Therefore, $A$ is closed. We obtain that $B$ is sequential by the definition. In conclusion, every sequentially compact subset of $G$ is sequential since $B$ is arbitrary.
\end{proof}

\begin{lemma}\label{h}
If all (countably, sequentially) compact subspaces of a topological gyrogroup $G$ are Fr\'echet-Urysohn, then all (countably, sequentially) compact subspaces of $G$ are strongly Fr\'echet-Urysohn.
\end{lemma}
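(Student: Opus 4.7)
The plan is to fix a (countably, sequentially) compact subspace $K\subseteq G$ and prove that $K$ is strongly Fr\'echet-Urysohn at an arbitrary point $x\in K$. Given a decreasing sequence $\{A_{n}\}_{n\in\mathbb{N}}$ of subsets of $K$ with $x\in\bigcap_{n\in\mathbb{N}}\overline{A_{n}}$, the task is to select $x_{n}\in A_{n}$ with $x_{n}\to x$.

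First I would invoke the Fr\'echet-Urysohn hypothesis on $K$ to choose, for every $n\in\mathbb{N}$, a sequence $\{y_{n,k}\}_{k\in\mathbb{N}}\subseteq A_{n}$ with $y_{n,k}\to x$ as $k\to\infty$, arranging (by passing to tails and using Hausdorffness of $K$) that the points $y_{n,k}$ are pairwise distinct. Next I would translate to the identity: since the left-translation $g\mapsto(\ominus x)\oplus g$ is a homeomorphism of $G$, the image $L=(\ominus x)\oplus K$ is again (countably, sequentially) compact and therefore Fr\'echet-Urysohn by hypothesis, while the translated double sequence $z_{n,k}=(\ominus x)\oplus y_{n,k}$ satisfies $z_{n,k}\to 0$ for each fixed $n$.

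The crux is then to extract strictly increasing indices $(l_{j}),(m_{j})\subseteq\mathbb{N}$ with $z_{l_{j},m_{j}}\to 0$ by a strong $\alpha_{4}$-style diagonalization, paralleling the proof of the lemma from \cite{LF1} that every Fr\'echet-Urysohn topological gyrogroup is a strong $\alpha_{4}$-space. Granted such a diagonal, translating back yields $y_{l_{j},m_{j}}=x\oplus z_{l_{j},m_{j}}\to x$ with $y_{l_{j},m_{j}}\in A_{l_{j}}$, and setting $j(n)=\min\{j:l_{j}\geq n\}$ and $x_{n}=y_{l_{j(n)},m_{j(n)}}$ gives $x_{n}\in A_{l_{j(n)}}\subseteq A_{n}$ (using that $\{A_{n}\}$ is decreasing) with $x_{n}\to x$, completing the argument.

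The main obstacle is the $\alpha_{4}$-style diagonalization step, since the cited lemma of \cite{LF1} supplies the strong $\alpha_{4}$ property only under the assumption that the whole topological gyrogroup is Fr\'echet-Urysohn, while here only its (countably, sequentially) compact subspaces are. One way around this would be to adapt the proof of \cite{LF1} so that only gyrogroup operations near $0\in L$ are invoked, using that $L$ itself is Fr\'echet-Urysohn and compact; alternatively one could first show, in the spirit of the first-countability and metrisability results cited in Corollary \ref{g} and \cite[Theorem 3.10]{LF}, that a Fr\'echet-Urysohn (countably, sequentially) compact subset of a topological gyrogroup is automatically first-countable, whence strong Fr\'echet-Urysohn follows at once by choosing $x_{n}\in A_{n}\cap V_{n}$ for a decreasing countable base $\{V_{n}\}_{n\in\mathbb{N}}$ at $x$. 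The countably compact and sequentially compact variants are handled in parallel, noting that the translated set $L$ inherits the same compactness type from $K$ via a continuous surjection.
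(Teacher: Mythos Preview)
Your outline is on the right track and you have correctly isolated the crux: the strong $\alpha_{4}$-type diagonalization cannot be imported verbatim from \cite{LF1} because only the (countably, sequentially) compact subspaces of $G$ are assumed Fr\'echet-Urysohn, not $G$ itself. However, neither of your two proposed workarounds closes the gap as stated.

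For workaround (a), you suggest carrying out the diagonalization ``inside $L$'' using that $L$ is Fr\'echet-Urysohn. The difficulty is that the standard $\alpha_{4}$ trick does not stay inside $L$. Concretely, one pushes the sets $B_{n}=(\ominus x)\oplus A_{n}$ off $0$ by a convergent sequence $b_{n}\to 0$, forming $C_{n}=(B_{n}\cap V_{n})\oplus b_{n}$ for suitable symmetric neighbourhoods $V_{n}$ chosen so that $0\notin\overline{C_{n}}$ but $b_{n}\in\overline{C_{n}}$; one then applies the Fr\'echet-Urysohn property to $D=\bigcup_{n}C_{n}$ at $0$. The point is that $C_{n}\subset B_{n}\oplus b_{n}$ need not lie in $L=B$, so the Fr\'echet-Urysohn property of $L$ alone is useless here. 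The paper's key step is to observe that $D\subset B\oplus S$ with $S=\{0\}\cup\{b_{n}:n\in\mathbb{N}\}$ compact, and that $B\oplus S$, being the continuous image of $B\times S$, is again (countably, sequentially) compact; hence the \emph{hypothesis on all such subspaces} makes $B\oplus S$ Fr\'echet-Urysohn, and the diagonal can be extracted there. This enlargement from $L$ to $B\oplus S$ is the missing idea in your sketch.

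For workaround (b), you propose to show that a Fr\'echet-Urysohn (countably, sequentially) compact subset of a topological gyrogroup is automatically first-countable. Nothing in Lemma~\ref{e}, Corollary~\ref{p}, or \cite[Theorem 3.10]{LF} gives this implication: those results show first-countability $\Leftrightarrow$ metrizability for such subsets, not Fr\'echet-Urysohn $\Rightarrow$ first-countable. In fact the one-point compactification of an uncountable discrete set embeds as a compact Fr\'echet-Urysohn, non-first-countable subspace of the topological group $\{0,1\}^{\kappa}$, so the implication fails for a single compact subset; you would at minimum need the full hypothesis that \emph{every} compact subset is Fr\'echet-Urysohn, and even then a direct argument is not apparent without essentially reproducing the $B\oplus S$ construction above.

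In short, your translation-and-diagonalize framework matches the paper's, but the decisive ingredient you are missing is the passage to the auxiliary (countably, sequentially) compact set $B\oplus S\supset D$, which is what lets the hypothesis be applied at the point where Fr\'echet-Urysohn is actually needed.
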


\begin{proof}
Let $G$ be a topological gyrogroup and all (countably, sequentially) compact subspaces of $G$ Fr\'echet-Urysohn. It follows from \cite[Lemma 2.4]{LS} that all (countably, sequentially) compact subsets of $G$ are closed. Let $A$ be an arbitrary (countably, sequentially) compact subset of $G$. We have that $A$ is Fr\'echet-Urysohn and closed. Let $a$ be an accumulation point of $A$ and let $\{A_{n}\}_{n\in \mathbb{N}}$ be a decreasing sequence of subsets of $A$ such that $a\in \bigcap _{n\in \mathbb{N}}\overline{A_{n}}$. Since $A$ is Fr\'echet-Urysohn, there exists a sequence $\{a_{n}\}_{n\in \mathbb{N}}$ in $A\backslash \{a\}$ converging to $a$. Set $$B = (\ominus a)\oplus A,~~and~~B_{n}=(\ominus a)\oplus A_{n},~~b_{n}=(\ominus a)\oplus a_{n}~~~~for~~all~~n\in \mathbb{N}.$$ Obviously, $B$ is closed, $0\in (\ominus a)\oplus \overline{A_{n}}=\overline{B_{n}}\subset B$, $b_{n}\in B\backslash \{0\}$ for every $n\in \mathbb{N}$ and $\{b_{n}\}_{n\in \mathbb{N}}$ converges to $0$. Let $\{V_{n}\}_{n\in \mathbb{N}}$ be a symmetric open neighborhood sequence of $0$ in $G$ with $b_{n}\not \in V_{n}\oplus V_{n}$ for each $n\in \mathbb{N}$. Put $C_{n}=(B_{n}\cap V_{n})\oplus b_{n}$  for every $n\in \mathbb{N}$. Since $0\in \overline{B_{n}\cap V_{n}}$, we have $b_{n}\in \overline{C_{n}}$. Moreover, it follows from $V_{n}\cap C_{n}\subset V_{n}\cap (V_{n}\oplus b_{n})=\emptyset$ that $0\not \in \overline{C_{n}}$.

Now set  $$D=\bigcup \{C_{n}: n\in \mathbb{N}\},~~and~~S=\{0\}\cup \{b_{n}: n\in \mathbb{N}\}.$$ Then $D\subset \bigcup _{n\in \mathbb{N}}(B_{n}\oplus b_{n})\subset B\oplus S$.

{\bf Claim.} The subspace $B\oplus S$ of $G$ is Fr\'echet-Urysohn and closed.

It is obvious that $S$ is compact and sequentially compact.

{\bf Case 1.} Suppose that $A$ is compact. By the compactness of $A$, $B$ is also compact. Therefore, the Cartesian product $B\times S$ is compact. Moreover, since the binary operation in $G\times G$ is jointly continuous, it is obtained that $B\oplus S$ is compact as the continuous image of $B\times S$. Thus, $B\oplus S$ is Fr\'echet-Urysohn and closed.

{\bf Case 2.}We assume that $A$ is countably compact or sequentially compact. If $A$ is countably compact, by the Fr\'echet-Urysohn property of $A$, we obtain that $A$ is sequentially compact. Since $L_{\ominus a}$ is homeomorphic, it is obtained that $B$ is also sequentially compact. Then the Cartesian product $B\times S$ is sequentially compact. Furthermore, since the binary operation in $G\times G$ is jointly continuous, it is achieved that $B\oplus S$ is sequentially compact as the continuous image of $B\times S$. Thus, $B\oplus S$ is Fr\'echet-Urysohn and closed.

Since $b_{n}\in \overline{C_{n}}$ for each $n\in \mathbb{N}$ and $b_{n}$ converges to $0$, we have that $0\in \overline{D} \subset B\oplus S$, and we can find a sequence $\{d_{k}\}_{k\in \mathbb{N}}$ converging to $0$ in $D$. For all $n\in \mathbb{N}$, since $0\not \in \overline{C_{n}}$, $C_{n}$ contains only finitely many terms of the sequence $\{d_{k}\}_{k\in \mathbb{N}}$. There exists a subsequence $\{C_{n_{k}}\}_{k\in \mathbb{N}}$ of the sequence $\{C_{n}\}_{n\in \mathbb{N}}$ with $d_{k}\in C_{n_{k}}$ for each $k\in \mathbb{N}$. Since $C_{n_{k}}\subset B_{n_{k}}\oplus b_{n_{k}}=((\ominus a)\oplus A_{n_{k}})\oplus b_{n_{k}}$, there exists $x_{n_{k}}\in A_{n_{k}}$ such that $d_{k}=((\ominus a)\oplus x_{n_{k}})\oplus b_{n_{k}}$, for all $k\in \mathbb{N}$. Then $(\ominus a)\oplus x_{n_{k}}=d_{k}\oplus gyr[(\ominus a)\oplus x_{n_{k}},b_{n_{k}}](\ominus b_{n_{k}})$, so $x_{n_{k}}=a\oplus (d_{k}\oplus gyr[(\ominus a)\oplus x_{n_{k}},b_{n_{k}}](\ominus b_{n_{k}}))$. Since $gyr[x,y](z)=\ominus (x\oplus y)\oplus (y\oplus (y\oplus z))$ for all $x,y,z\in G$, it follows that $gyr[x,y](0)=\ominus (x\oplus y)\oplus (x\oplus y)=0$. Therefore, $x_{n_{k}}=a\oplus (d_{k}\oplus gyr[(\ominus a)\oplus x_{n_{k}},b_{n_{k}}](\ominus b_{n_{k}}))\rightarrow a$ whenever $k\rightarrow \infty$. When $n_{k-1}< n\leq n_{k}$, fix $y_{n}=x_{n_{k}}$. Then $y_{n}\in A_{n}$ for each $n\in \mathbb{N}$ and $y_{n}\rightarrow a$. In conclusion, $A$ is strongly Fr\'echet-Urysohn.
\end{proof}

\begin{theorem}\label{n}
Let $H$ be a closed $L$-subgyrogroup of a topological gyrogroup $G$ with every (countably, sequentially) compact subset of $H$ being first-countable. If $G/H$ has one of the following conditions, then $G$ has the same property:

$(a)$ every (countably, sequentially) compact subset is strongly Fr\'echet-Urysohn.

$(b)$ every (countably, sequentially) compact subset is strictly Fr\'echet-Urysohn.

\end{theorem}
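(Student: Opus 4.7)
The plan is to adapt the $B\oplus S$ construction from the proof of Lemma \ref{h} to the present quotient setting, with the Fr\'echet-Urysohn hypothesis on $A$ used there replaced by a lifting-and-refinement argument through $\pi\colon G\to G/H$. Fix a (c,s) compact $A\subseteq G$, $a\in A$, and a sequence $\{A_n\}_{n\in\mathbb{N}}$ of subsets of $A$ with $a\in\bigcap_n\overline{A_n}$ (decreasing for (a), arbitrary for (b)); one seeks $x_n\in A_n$ with $x_n\to a$. Since $L_{\ominus a}$ is a homeomorphism, I reduce to $a=0$. The fiber $F:=A\cap H$ is then a (c,s) compact subset of $H$, first-countable by hypothesis, and in fact metrizable by Lemma \ref{e} and Corollary \ref{p}; these ingredients will play the role of F-U of $A$ in Lemma \ref{h}.

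I would carry out the argument in three moves. First, since $\pi(A)\subseteq G/H$ is (c,s) compact and hence strongly (resp.\ strictly) Fr\'echet-Urysohn by hypothesis, apply this to $\{\pi(A_n)\}$ to get $y_n\in\pi(A_n)$ with $y_n\to\pi(0)$, and fix lifts $b_n\in A_n$ with $\pi(b_n)=y_n$. Second, use the metrizable fiber base: pick a decreasing neighborhood base $\{W_k\}$ of $0$ in $F$ with $W_k=O_k\cap F$ for open, decreasing $O_k\subseteq G$. For each $k$, reapply strong/strict F-U of $\pi(A)$ to $\{\pi(A_n\cap O_k)\}_n$ (noting that $O_k$ is an open neighborhood of $0$ and $0\in\overline{A_n}$, so $0\in\overline{A_n\cap O_k}$ and hence $\pi(0)\in\overline{\pi(A_n\cap O_k)}$) to produce $b_n^{(k)}\in A_n\cap O_k$ with $\pi(b_n^{(k)})\to\pi(0)$ as $n\to\infty$. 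Third, carry out the Lemma \ref{h}-style construction for the doubly-indexed family $\{b_n^{(k)}\}$: choose symmetric open $V_n^{(k)}\ni 0$ with $b_n^{(k)}\notin V_n^{(k)}\oplus V_n^{(k)}$, set $C_n^{(k)}=(A_n\cap V_n^{(k)})\oplus b_n^{(k)}$, $D=\bigcup_{n,k}C_n^{(k)}$, and the auxiliary $S=\overline{\{b_n^{(k)}:n,k\in\mathbb{N}\}}\cup F$. Then $D\subseteq A\oplus S$, and $A\oplus S$ is (c,s) compact as the continuous image of $A\times S$, handled by the same case analysis as in Lemma \ref{h}. Extracting a convergent sequence $d_\ell\to 0$ inside $D$, writing $d_\ell=x_\ell\oplus b_{n(\ell)}^{(k(\ell))}$ with $x_\ell\in A_{n(\ell)}$, and invoking the right-cancellation identity of Lemma \ref{a}(2) together with the $L$-subgyrogroup identity $\mathrm{gyr}[g,h](H)=H$ (which keeps the gyration correction inside $H$ and hence vanishing with $b_{n(\ell)}^{(k(\ell))}$) yields $x_\ell\to 0$ exactly as at the end of the proof of Lemma \ref{h}; reindexing gives $x_n\in A_n$ with $x_n\to 0$. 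Case (b) proceeds verbatim, with strictly F-U replacing strongly F-U and arbitrary replacing decreasing refinements throughout.

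The main obstacle is the extraction of $d_\ell\to 0$ inside $D$: the open sets $O_k$ constitute a base at $0$ only in the fiber $F$, not in $G$, so the $b_n^{(k)}$ do not individually converge to $0$ in $G$; convergence emerges only through a diagonal choice of indices $(n(\ell),k(\ell))$ that simultaneously exploits the fiber control supplied by the metric on $F$ (from metrizability) and the quotient convergence $\pi(b_n^{(k)})\to\pi(0)$ supplied by strong/strict F-U of $\pi(A)$. Organizing this diagonalization inside the (c,s) compact envelope $A\oplus S$, and verifying that $A\oplus S$ is Fr\'echet-Urysohn at $0$ so that a sequence in $D$ can actually be extracted from $0\in\overline{D}\subseteq A\oplus S$, is the delicate heart of the proof; the $L$-subgyrogroup hypothesis then intervenes only in the final algebraic step, ensuring that the gyration factor appearing in Lemma \ref{a}(2) lies in $H$ and the translation from convergence of $d_\ell$ back to convergence of $x_\ell\in A_n$ goes through.
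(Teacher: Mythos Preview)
Your approach diverges sharply from the paper's and contains a genuine gap. The paper's proof is short and indirect: for a (countably, sequentially) compact $C\subseteq G$, it restricts the quotient map to $\varphi=\pi|_C\colon C\to\pi(C)$, observes that $\varphi$ is closed (via \cite[Lemma~2.4]{LS}), that $\pi(C)$ is (c,s) compact and hence strongly (resp.\ strictly) Fr\'echet--Urysohn by hypothesis, and that each fiber $\varphi^{-1}(\varphi(c))=(c\oplus H)\cap C$ is first-countable. It then appeals to black-box inverse-fiber lemmas (\cite[Proposition~4.7.18]{AA} and \cite[Lemma~2.11]{LS}) together with Lemma~\ref{h} to conclude. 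No direct sequence construction in $G$ is attempted.

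Your attempt to rerun the $B\oplus S$ machinery of Lemma~\ref{h} breaks down at exactly the point you flag as ``the delicate heart''. In Lemma~\ref{h} the Fr\'echet--Urysohn property of $B\oplus S$ is available because it is part of the \emph{hypothesis} that all (c,s) compact subsets of $G$ are Fr\'echet--Urysohn. Here you have no such hypothesis on $G$; asserting that $A\oplus S$ is Fr\'echet--Urysohn at $0$ is precisely what you are trying to prove, so the extraction of $d_\ell\to 0$ from $0\in\overline{D}$ is circular. Your outline offers no alternative mechanism for producing this sequence.

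The final algebraic step is also broken. In Lemma~\ref{h} one has $b_{n_k}\to 0$ in $G$, so $\mathrm{gyr}[\,\cdot\,,b_{n_k}](\ominus b_{n_k})\to\mathrm{gyr}[\,\cdot\,,0](0)=0$ by joint continuity, and $x_{n_k}\to a$ follows. In your setting the lifts $b_n^{(k)}$ satisfy only $\pi(b_n^{(k)})\to\pi(0)$; they need not converge to $0$ in $G$, and the $L$-subgyrogroup identity $\mathrm{gyr}[g,h](H)=H$ does not help, since $b_n^{(k)}\in A_n\cap O_k$ is not an element of $H$ and the gyration correction $\mathrm{gyr}[x_\ell,\,b_{n(\ell)}^{(k(\ell))}](\ominus b_{n(\ell)}^{(k(\ell))})$ has no reason to lie in $H$ or to tend to $0$. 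Thus even granting a convergent $d_\ell\to 0$, you cannot conclude $x_\ell\to 0$.
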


\begin{proof}
It follows from Theorem 3 in \cite{AW} that every $T_{0}$ topological gyrogroup is regular. Suppose that $C$ is a (countably, sequentially) compact subset of $G$. It follows from Proposition \ref{a}, \cite[Lemma 2.2]{LS} and \cite[Lemma 2.4]{LS} that $C$ is closed in $G$. Set $\varphi =\pi |_{C}:C\rightarrow \pi (C)$, we obtain that $\pi (C)$ is (countably, sequentially) compact. Moreover, $\varphi$ is a closed mapping by \cite[Lemma 2.4]{LS}, and $\varphi^{-1}(\varphi(c))=\pi ^{-1}(\pi (c))\cap C = (c\oplus H)\cap C$ is first-countable for every $c\in C$. We complete the proof by Lemma \ref{h}, \cite[Proposition 4.7.18]{AA} and \cite[Lemma 2.11]{LS}.
\end{proof}

\section{The weakly three-space property in topological gyrogroups  with $\omega^{\omega}$-base }

Finally, we study whether having an $\omega^{\omega}$-base is a (weakly) three-space property in topological gyrogroups. In particular, we discuss the following question.

\begin{question}\label{4wt1}
If a closed $L$-subgyrogroup $H$ of a topological gyrogroup $G$ and the quotient space $G/H$ both have an $\omega^{\omega}$-base, does $G$ have an $\omega^{\omega}$-base?
\end{question}

We will show that having an $\omega^{\omega}$-base is a weakly three-space property in topological gyrogroups. More precisely, if a closed $L$-subgyrogroup $H$ of a topological gyrogroup $G$ is first-countable and $G/H$ has an $\omega^{\omega}$-base, then $G$ has an $\omega^{\omega}$-base.

\begin{theorem}
If a closed $L$-subgyrogroup $H$ of a topological gyrogroup $G$  is first-countable and the quotient space $G/H$ has an $\omega^{\omega}$-base, then $G$ has an $\omega^{\omega}$-base.
\end{theorem}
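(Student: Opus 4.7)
The strategy is to construct an explicit $\omega^{\omega}$-base at the identity $0$ of $G$ by combining the two given structures. By the first-countability of $H$, fix a decreasing open neighborhood base $\{V_{n}\}_{n\in\mathbb{N}}$ of $0$ in $H$, and let $\{U_{\alpha}:\alpha\in\mathbb{N}^{\mathbb{N}}\}$ be the given $\omega^{\omega}$-base of $\pi(0)$ in $G/H$. For each $n$, using the subspace topology pick an open set $\widetilde{V}_{n}\subset G$ with $\widetilde{V}_{n}\cap H=V_{n}$; by inductive use of joint continuity of $\oplus$ at $(0,0)$, these can be arranged to be decreasing in $n$ and to satisfy $\widetilde{V}_{n+1}\oplus\widetilde{V}_{n+1}\subset\widetilde{V}_{n}$. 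Then set
\[T_{\alpha}:=\widetilde{V}_{\alpha(1)}\cap\pi^{-1}(U_{\alpha}),\qquad\alpha\in\mathbb{N}^{\mathbb{N}}.\]
Each $T_{\alpha}$ is an open neighborhood of $0$, and $T_{\beta}\subset T_{\alpha}$ whenever $\alpha\leq\beta$, by monotonicity of $\widetilde{V}_{n}$ and $U_{\alpha}$.

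The crux is to verify that $\{T_{\alpha}\}$ is a neighborhood base at $0$. Given an arbitrary open $O\ni 0$, I would first apply joint continuity of $\oplus$ to find open $O_{1},O_{2}\ni 0$ with $O_{1}\oplus O_{2}\subset O$, and the first-countability of $H$ to find $n_{0}$ with $V_{n_{0}}\subset O_{2}\cap H$; this gives $O_{1}\oplus V_{n_{0}}\subset O$. Since the quotient map $\pi\colon G\to G/H$ is open when $H$ is an $L$-subgyrogroup (established in the Bao--Lin papers \cite{BL,BL2}), the set $\pi(O_{1})$ is an open neighborhood of $\pi(0)$ in $G/H$, so some $U_{\beta}\subset\pi(O_{1})$. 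Take $\alpha\in\mathbb{N}^{\mathbb{N}}$ defined by $\alpha(1)=\max\{n_{0},\beta(1)\}$ and $\alpha(i)=\beta(i)$ for $i\geq 2$, so that $\alpha\geq\beta$, $\widetilde{V}_{\alpha(1)}\subset\widetilde{V}_{n_{0}}$, and $U_{\alpha}\subset U_{\beta}\subset\pi(O_{1})$.

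The main obstacle is showing $T_{\alpha}\subset O$. Any $x\in T_{\alpha}$ satisfies $\pi(x)\in U_{\alpha}\subset\pi(O_{1})$, which (from the coset partition of $G$ by $H$) gives a decomposition $x=o\oplus h$ with $o\in O_{1}$ and $h\in H$. The target is to force $h\in V_{n_{0}}$, as this would yield $x\in O_{1}\oplus V_{n_{0}}\subset O$. Using left cancellation (Lemma \ref{a}(1)) we have $h=(\ominus o)\oplus x$; the delicate point is that $h$ depends on the choice of representative $o\in(x\oplus H)\cap O_{1}$, and one must exploit the freedom to vary $o$ together with the nested structure $\widetilde{V}_{n+1}\oplus\widetilde{V}_{n+1}\subset\widetilde{V}_{n}$ and the gyroassociative identities in Lemma \ref{a}(2)--(4) to guarantee that some representative produces $h\in\widetilde{V}_{n_{0}}\cap H=V_{n_{0}}$. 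This gyrogroup-specific calculation, needed to compensate for right translations failing to be homeomorphisms in the general gyrogroup setting (unlike in topological groups), is the technical heart of the proof, and it is here that the $L$-subgyrogroup property of $H$—ensuring that $\operatorname{gyr}[a,h]$ maps $H$ into itself and hence that the coset decomposition behaves well under perturbation of the representative—is used essentially.
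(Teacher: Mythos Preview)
Your overall architecture matches the paper's: intersect a countable nested family of neighborhoods of $0$ in $G$ (whose traces on $H$ form a base there) with preimages of an $\omega^{\omega}$-base on $G/H$, indexed over $\mathbb{N}\times\mathbb{N}^{\mathbb{N}}\cong\mathbb{N}^{\mathbb{N}}$. The gap is exactly at the step you flag as the crux. With only $U_{\beta}\subset\pi(O_{1})$, the representative $o$ you extract lies in $O_{1}$ but carries no information relative to the sets $\widetilde{V}_{n}$, so the formula $h=(\ominus o)\oplus x$ cannot be forced into any $\widetilde{V}_{n_{0}}$. Your proposed remedy --- varying the coset representative and invoking the gyroassociative identities --- is not the right lever and is more complicated than what is actually needed; the $L$-subgyrogroup hypothesis plays no further role here beyond making $\pi$ open and identifying fibers with cosets.

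The fix, which is what the paper does, is a one-line adjustment to your setup: arrange the $\widetilde{V}_{n}$ (the paper's $W_{n}$) to be \emph{symmetric}, and shrink the quotient neighborhood to $U_{\beta}\subset\pi(O_{1}\cap\widetilde{V}_{n_{0}+1})$ rather than merely $\pi(O_{1})$. Then for $x\in T_{\alpha}$ with $\alpha(1)\geq n_{0}+1$ one may take the representative $o\in O_{1}\cap\widetilde{V}_{n_{0}+1}$; since also $x\in\widetilde{V}_{\alpha(1)}\subset\widetilde{V}_{n_{0}+1}$ and this set is symmetric, left cancellation alone gives
\[
h=(\ominus o)\oplus x\in\widetilde{V}_{n_{0}+1}\oplus\widetilde{V}_{n_{0}+1}\subset\widetilde{V}_{n_{0}},
\]
hence $h\in\widetilde{V}_{n_{0}}\cap H=V_{n_{0}}$ and $x=o\oplus h\in O_{1}\oplus V_{n_{0}}\subset O_{1}\oplus O_{2}\subset O$. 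No gyroassociativity and no freedom in the choice of representative is required beyond this.
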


\begin{proof}
Suppose that $\{W_{n}\}_{n\in \mathbb{N}}$ is a family of open symmetric neighborhoods of $0$ and is decreasing with $W_{n+1}\oplus W_{n+1}\subset W_{n}$ for each $n\in \mathbb{N}$. Since $H$ is first-countable, suppose further that $\{W_{n}\cap H\}_{n\in \mathbb{N}}$ is an open neighborhood base of $0$ in $H$. Assume that $\mathscr V=\{V_{\alpha}:\alpha \in \mathbb{N}^{\mathbb{N}}\}$ is an $\omega^{\omega}$-base in $G/H$. Without loss of generality, let $V_{\alpha}$ be symmetric for every $V_{\alpha} \in \mathscr V$. Set $$\{U_{\alpha}=\pi ^{-1}(V_{\alpha}):\alpha \in \mathbb{N}^{\mathbb{N}}\}.$$ Put $R_{\beta}=W_{n}\cap U_{\alpha}$, for each $n\in \mathbb{N}$ and $\alpha =(\alpha _{i})\in \mathbb{N}^{\mathbb{N}}$, where $\beta =(n,\alpha _{1},\alpha _{2},...)$. Then $\mathscr R=\{R_{\beta}:\beta \in \mathbb{N}\times \mathbb{N}^{\mathbb{N}}\}$ is a family of open symmetric neighborhoods of $0$ with $R_{\beta}\subset R_{\gamma}$ if $\beta \geq \gamma$.

{\bf Claim.} $\mathscr R$ is an $\omega^{\omega}$-base for $G$.

Since $G$ is homogenous, it suffices to show that $\mathscr R$ is an $\omega^{\omega}$-base at $0$ for $G$. Let $U$ be an arbitrary open neighborhood of $0$. We can find an open symmetric neighborhood $V$ of $0$ with $V\oplus V\subset U$. By the construction of $W_{n}$, there exists $n\in \mathbb{N}$ with $W_{n}\cap H\subset V$. Then, as $\mathscr V$ is a symmetric $\omega^{\omega}$-base in $G/H$, we can choose $V_{\alpha}\in \mathscr V$ such that $V_{\alpha}=\pi (U_{\alpha})\subset \pi (V\cap W_{n+1})$. Put $$R_{\beta}=W_{n+1}\cap \pi^{-1}(V_{\alpha})=W_{n+1}\cap U_{\alpha}.$$

For an arbitrary $g\in R_{\beta}$, since $R_{\beta}=W_{n+1}\cap U_{\alpha}$, $g\in W_{n+1}$ and $g\in U_{\alpha}\subset (V\cap W_{n+1})\oplus H$. There exist $a\in V\cap W_{n+1}$ and $b\in H$ satisfying $g=a\oplus b$. Therefore, $b=(\ominus a)\oplus g\in (W_{n+1})\oplus W_{n+1}\subset W_{n}$. So $g\in (V\cap W_{n+1})\oplus (H\cap W_{n})\subset V\oplus V\subset U$. In conclusion, $R_{\beta}\subset U$ and we complete the proof.
\end{proof}

\end{document}